\newtheorem{thm}{Theorem}[section]
\newtheorem{lemma}[thm]{Lemma}
\newtheorem{cor}[thm]{Corollary}
\theoremstyle{definition}
\newtheorem{defin}[thm]{Definition}
\newtheorem{assmp}{Assumption}
\newtheorem{exam}{Example}[section]
\theoremstyle{remark}
\newtheorem{rem}[thm]{Remark}
\newcommand{\qedwhite}{\hfill \ensuremath{\Box}}
\renewenvironment{proof}{{\raggedright \bfseries Proof.}}{\qedwhite}
\numberwithin{equation}{section}
\newcommand\blfootnote[1]{%
	\begingroup
	\renewcommand\thefootnote{}\footnote{#1}%
	\addtocounter{footnote}{-1}%
	\endgroup
}
\def\CC{\mathbb C}
\def\ZZ{\mathbb Z}
\def\RR{\mathbb R}
\def\PP{\mathbb P}
\def\oo{\mathcal O}
\def\aa{\mathcal A}
\def\nn{\mathcal N}
\def\ff{\mathcal F}
\def\suml{\sum\limits}
\def\supl{\sup\limits}
\title{Random sampling of signals concentrated on compact set in localized reproducing kernel subspace of $L^p(\RR^n)$}
\author{Dhiraj Patel\thanks{Email id: dpatel.iitd@gmail.com}, S. Sivananthan\thanks{Email id: siva@maths.iitd.ac.in}\\ 
Department of Mathematics, Indian Institute of Technology Delhi,\\ New Delhi-110016, India}
\date{}
\begin{document}
	\maketitle
	
	\begin{abstract}
		The paper is devoted to studying the stability of random sampling in a localized reproducing kernel space. We show that if the sampling set on $\Omega$ (compact) discretizes the integral norm of simple functions up to a given error, then the sampling set is stable for the set of functions concentrated on $\Omega$. Moreover, we prove with an overwhelming probability that $\oo(\mu(\Omega)(\log \mu(\Omega))^3)$ many random points uniformly distributed over $\Omega$ yield a stable set of sampling for functions concentrated on $\Omega$.
	\end{abstract}
	
	\textbf{Keywords:} Random sampling; Reproducing kernel space; Sampling inequality; Covering number; Discretization.
	\blfootnote{2010 \textit{Mathematics Subject Classification.} Primary 42A61, 94A20, 41A65; Secondary 26D15, 60E15.}

	\section{Introduction}
	The sampling problem is one of the most active research area in the field of signal processing, image processing, and digital communication. The problem is related to find a discrete sample set such that a function $f$ can be uniquely determined and reconstructed by its discrete sample values. However, this problem is not well defined unless we assume some additional information on the function space. In this paper, we focus on the space of localized reproducing kernel subspace of $L^p(\RR^n):=L^p(\RR^n,\mu)$, where $\mu$ is a Lebesgue measure on $\RR^n$. \par 
	
	A countable set $X=\{ x_j\in \RR^n: j\in J \}$ is said to be a stable sample or stable set of sampling for the function space $V\subseteq L^p(\RR^n)$ if there exist positive constants $A$ and $B$ such that
	\begin{equation}
	\label{eqn:saminq}
	A\|f\|_{L^p(\RR^n)}^p\leq \sum_{j\in J} |f(x_j)|^p\leq B\|f\|_{L^p(\RR^n)}^p ~~\forall f\in V.
	\end{equation}
	In case of Paley-Wiener space $PW_{[a,b]}(\RR)$, the stability of a sampling set is completely characterized by Beurling density condition. However, a similar result is not valid for $PW_S(\RR^n)$, where $S$ is a convex subset of $\RR^n$, see \cite[Section 5.7]{olevskii2016functions}. Hence, to overcome these difficulties in studying non-uniform sampling in higher-dimension, we consider a set of random points and check the probability of a stable sampling set. At the same time, the problem of finding a stable random sampling on $\RR^n$ is not feasible in general. Bass and Gr\"{o}chenig \cite{bass2010random} observed that for each random sample identically and uniformly distributed over each cube $k + [0, 1]^n$ in $\RR^n$, the sampling inequality \eqref{eqn:saminq} fails almost surely for Paley-Wiener space. Moreover, for smooth function $f$ in $L^p(\RR^n)$, the sample value $f(x_j)$ may not assist in sampling inequality for large values of $x_j$. To resolve these problems, we consider random sample points are drawn uniformly and identically from a compact set $\Omega$, and a class of functions concentrated on $\Omega$. Such functions are useful in many application of engineering fields such as information and communication theory \cite{dilmaghani2003novel}, signal detection and estimation \cite{haykin1998signal}, neuroscience \cite{hoogenboom2006localizing}, optics \cite{frieden1971viii}, and many more.\par 
	
	Random sampling problem is closely related to learning theory \cite{cucker2002mathematical,poggio2003mathematics,smale2005shannon}, compressed sensing \cite{donoho2006compressed}, and widely applied in information recovery \cite{laska2006random}. Trigonometric polynomials are effectively used in practical applications such as computer tomography \cite{averbuch2001fast}, geophysics \cite{rauth1998smooth}, image processing \cite{strohmer1997computationally}, and cardiology \cite{strohmer1996recover}. Bass and Gr\"{o}chenig studied random sampling for multivariate trigonometric polynomial \cite{bass2005random}; Cand\'{e}s, Romberg, and Tao reconstructed sparse trigonometric polynomial from a random sample set \cite{candes2006robust}. In the last decades, random sampling studied for Paley-Wiener space \cite{bass2010random,bass2013relevant}; shift-invariant space \cite{fuhr2014relevant,yang2019random,yang2013random}; continuous function space with bounded derivative \cite{tao2019random}; function space with finite rate of innovation \cite{lu2019non}; reproducing kernel subspace of $L^p(\RR^n)$ which is an image of an idempotent integral operator \cite{li2020random,patel2020random}.\par 
	
	In this paper, we consider localized reproducing kernel subspace $V$ of $L^p(\RR^n)$ (defined in Section \ref{sec:preliminaries}), which takes into consideration of existing model spaces. A subset $V^*(\Omega,\delta)$ of $V$, is a set of \textit{$\delta$-concentrated} functions, define as follows $$V^*(\Omega,\delta):=\Big\{ f\in V: \int_{\Omega} |f(x)|^p\, dx\geq (1-\delta)\|f\|_{L^p(\RR^n)}^p \Big\}, ~~0<\delta<1.$$ We are interested in finding probability bound for random sample $\{\xi_\nu: \nu=1,\dots,r \}$ uniformly and identically drawn from $\Omega$ to be a stable sample set for $V^*(\Omega,\delta)$ and satisfy the sampling inequality
	\begin{equation}
	\label{eqn:ransaminq}
	A\|f\|_{L^p(\RR^n)}^p\leq \frac{1}{r}\sum_{\nu=1}^{r}|f(\xi_\nu)|^p\leq B\|f\|_{L^p(\RR^n)}^p ~~\forall f\in V^*(\Omega,\delta).
	\end{equation}
	Of course, the sampling inequality can be achievable with high probability for large-scale sample size. However, the problem seems interesting if one can find a minimal the sample size required to satisfy the sampling inequality \eqref{eqn:ransaminq}. It was proved for $\delta$-concentrated functions on the cube $C_R:=[ -R/2,R/2 ]^n$ that sample size required to be of $\oo(R^{2n})$ in case of Paley-Wiener space \cite{bass2010random}, shift-invariant space \cite{yang2019random}, and image of an idempotent integral operator \cite{patel2020random}. However, in recent years, it was shown that effective number of sample is indeed of order $\oo(R^n\log R^n)$ for Paley-Wiener space \cite{bass2013relevant} and shift-invariant subspace of $L^2(\RR^n)$ \cite{fuhr2014relevant}. The recent article \cite{li2020random} by Li et al. proved that for the space of image of an idempotent integral operator, $\delta$-concentrated functions on Corkscrew domain $\Omega$ satisfy \eqref{eqn:ransaminq} with sample size of order $\oo(\nu(\Omega)\log\nu(\Omega))$, where $\nu$ denotes the metric measure on $\Omega$.
	
	The main features of this paper are summarized as follow:
	\begin{enumerate}[label=(\roman*)]
		\item In the recent articles \cite{li2020random,patel2020random}, the random sampling problem studied for an image of an idempotent integral operator $T$, with an additional assumption that the integral kernel $K$ satisfies the regularity condition $$\lim\limits_{\epsilon\rightarrow 0}\left\| \sup\limits_{z\in \RR^n}|w_{\epsilon}(K)(\cdot+z,z)|\right\|_{L^1(\RR^n)}=0,$$ where $w_{\epsilon}(K)(x,y)=\supl_{x',y'\in [-\epsilon,\epsilon]^n}|K(x+x',y+y')-K(x,y)|.$ In this paper, we drop this assumption and study sampling inequality \eqref{eqn:ransaminq} for localizable reproducing kernel space. Further, instead of considering signals concentrated on cube $C_R=[ -R/2,R/2 ]^n$ in $n$-dimensional Euclidean space, we study the random sampling problem for signals concentrated on a compact subset $\Omega$ of $\RR^n$.
		
		\item  We show that any element in $V$ can be approximated by an element in a finite-dimensional subspace of $V$. As a consequence, we can prove the random sampling inequality \eqref{eqn:ransaminq} using the same line of proof in \cite{bass2010random,patel2020random,yang2013random}. However, it does not lead us to better sample size estimation. In this paper, we apply the idea of Marcinkiewicz type discretization result introduced in \cite{dai2020sampling} to solve random sampling problem in localizable reproducing kernel space. We show that if a sampling set is ``good'' discretization to the integral norm on $\Omega$ for the class of simple functions, then it is a stable sampling set for $\delta$-concentrated functions on $\Omega$. In addition, we prove that the sampling inequality \eqref{eqn:ransaminq} can be achievable with high probability if the sample size of order $\oo(\mu(\Omega)(\log \mu(\Omega))^3)$.
	\end{enumerate}
	
	We pursue the approach of \cite{dai2020sampling,yang2019random} with a mild condition on generators. Note that the result in \cite{yang2019random} based on strong decay condition of generators $$\phi(x)\leq \frac{C}{(1+|x|)^m}, ~~x\in \RR^n,$$ and in \cite{dai2020sampling} relied on the assumption of boundedness of entropy number.\par 
	
	This paper is organized as follows. In Section \ref{sec:preliminaries}, we give basic definitions, notations and preliminary results. In Section \ref{sec:coveringnumber}, we show that functions in a given compact set are bounded by simple functions. Moreover, under some condition on simple functions, we show that the sampling inequality hold for functions concentrated in a compact set. The main result of this paper is provided in Section \ref{sec:randomsampling}.
	
	\section{Preliminaries}
	\label{sec:preliminaries}
	
	In this section, we define the localized reproducing kernel subspace $V$ of $L^p(\RR^n),$ and discuss some interesting examples. 
	
	A sample set $U=\{ u : u\in \RR^n \}$ is \textit{relatively separated} with positive \textit{gap} $\beta$ if $$\beta=\inf_{\underset{u\neq u'}{u,u'\in U}} \| u-u' \|_{\infty}>0.$$
	
	The \textit{Wiener amalgam space} $W(L^1)(\RR^n)$ consist of all functions $f\in L^{\infty}(\RR^n)$ such that $$\|f\|_{W(L^1)(\RR^n)}:=\sum_{k\in \ZZ^n} \sup_{x\in [0,1]^n} |f(x+k)|<\infty.$$
	
	A family $\{ x_i:i\in I \}$ of elements of a Hilbert space $H$ is called a \textit{frame} for $H$ if there exist constants $c_1,c_2>0$ such that $$c_1\|x\|^2\leq \sum_{i\in I} |\langle x,x_i \rangle|\leq c_2\|x\|^2 ~~~\forall x\in H.$$ 
	\begin{defin}
		We say that a closed subspace $W\subseteq L^2(\RR^n)$ is \textit{localizable reproducing kernel Hilbert space}, if there exist: 
		\begin{enumerate}[label=(\alph*)]
			\item a relatively separated set $\Gamma\subseteq \RR^n$ (nodes);
			
			\item a function $\Theta\in W(L^1)(\RR^n)$ (envelope);
			
			\item a collection of continuous functions $\{ F_{\gamma}:\gamma\in \Gamma \}$  is a frame for $W,$ and satisfy the localization estimate
			\begin{equation}
			\label{eqn:framebound}
			|F_{\gamma}(x)|\leq \Theta(x-\gamma), ~~~\forall\gamma\in \Gamma.
			\end{equation}
		\end{enumerate} 
	\end{defin}
	The coefficient map $f\mapsto Cf:=(\langle f,F_{\gamma} \rangle)_{\gamma\in \Gamma}$ is bounded from $L^2(\RR^n)\rightarrow \ell^2(\Gamma)$, and can be extended to the bounded operator $C:L^p(\RR^n)\rightarrow \ell^p(\Gamma)$. Likewise, the adjoint operator $C^*:\ell^p(\Gamma)\rightarrow L^p(\RR^n), ~~1\leq p<\infty$ defined by $c\mapsto C^*c:=\sum_{\gamma\in \Gamma} c_{\gamma}F_{\gamma}$ is bounded. Using the collection $\{ F_{\gamma}:\gamma\in \Gamma \}$ is a frame, we get the range space $$V:=C^*(\ell^p(\Gamma))=\Big\{ \sum_{\gamma\in \Gamma} c_{\gamma}F_{\gamma}: c\in \ell^p(\Gamma) \Big\}$$ is a well-defined closed subspace of $L^p(\RR^n)$, for $1\leq p<\infty,$ and there exist constants $A,B>0$ such that
	\begin{equation}
	\label{eqn:frameinq}
	A\|f\|_{L^p(\RR^n)}^p\leq \sum_{\gamma\in \Gamma} |c_{\gamma}|^p\leq B\|f\|_{L^p(\RR^n)}^p,
	\end{equation}
	for every $f=\suml_{\gamma\in \Gamma} c_{\gamma}F_{\gamma} \in V$, (see \cite{grochenig2004localization}). Moreover, it is easy to show that $V$ is a reproducing kernel Banach space. First, let us recall a basic definition.\\
	
	\begin{defin}
		A Banach space $\Sigma$ of functions on a set $X$ is a \textit{reproducing kernel Banach space} if the point evaluation functional $f \mapsto f(x)$ is continuous for each $x\in X$, i.e., for every $x\in X$, there exists $C_x>0,$ such that $|f(x)|\leq C_x \|f\|,$ for all $f\in \Sigma.$
	\end{defin}

	\begin{lemma}
		The space $V$ is a reproducing kernel Banach space.
	\end{lemma}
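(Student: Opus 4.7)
The plan is to show that there exists a constant $C > 0$ (in fact uniform in $x$) such that $|f(x)| \leq C\|f\|_{L^p(\RR^n)}$ for every $f \in V$ and every $x \in \RR^n$. Since every element of $V$ has the form $f = \sum_{\gamma \in \Gamma} c_\gamma F_\gamma$ with $c \in \ell^p(\Gamma)$, I start from the pointwise estimate
\begin{equation*}
|f(x)| \leq \sum_{\gamma\in\Gamma} |c_\gamma|\,|F_\gamma(x)| \leq \sum_{\gamma\in\Gamma} |c_\gamma|\,\Theta(x-\gamma),
\end{equation*}
where the last inequality uses the localization estimate \eqref{eqn:framebound}.

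Next I would split the sum by Hölder's inequality. For $1 < p < \infty$ with conjugate exponent $p'$, this produces
\begin{equation*}
|f(x)| \leq \Bigl(\sum_{\gamma\in\Gamma} |c_\gamma|^p\Bigr)^{1/p} \Bigl(\sum_{\gamma\in\Gamma} \Theta(x-\gamma)^{p'}\Bigr)^{1/p'},
\end{equation*}
while for $p = 1$ the cruder bound $|f(x)| \leq \|\Theta\|_\infty \|c\|_{\ell^1}$ will suffice (note $\Theta \in W(L^1)(\RR^n) \subseteq L^\infty(\RR^n)$). The frame-type estimate \eqref{eqn:frameinq} converts $\|c\|_{\ell^p}$ into $B^{1/p}\|f\|_{L^p(\RR^n)}$, so the task reduces to bounding the envelope sum $\sum_{\gamma\in\Gamma} \Theta(x-\gamma)^{p'}$ uniformly in $x$.

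To dispose of the envelope sum, I would use the hypothesis that $\Gamma$ is relatively separated with gap $\beta > 0$: this ensures that any unit cube $k + [0,1]^n$, $k \in \ZZ^n$, contains at most a bounded number $N = N(\beta,n)$ of nodes of $\Gamma$. Because $\Theta \in L^\infty(\RR^n)$, we have $\Theta^{p'} \leq \|\Theta\|_\infty^{p'-1}\,\Theta$, so $\Theta^{p'} \in W(L^1)(\RR^n)$ as well. Grouping the nodes by the integer translate of $[0,1]^n$ they fall in and dominating each term by the supremum over the corresponding cube yields
\begin{equation*}
\sum_{\gamma\in\Gamma} \Theta(x-\gamma)^{p'} \leq N \sum_{k\in\ZZ^n} \sup_{y\in [0,1]^n} \Theta(x - k - y)^{p'} \leq N\,\|\Theta^{p'}\|_{W(L^1)(\RR^n)},
\end{equation*}
a bound independent of $x$. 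Combining the three displays gives a constant $C = C(p,\Theta,\Gamma,B)$ with $|f(x)| \leq C\|f\|_{L^p(\RR^n)}$ for all $x \in \RR^n$, proving continuity of point evaluation and hence that $V$ is a reproducing kernel Banach space.

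The step I expect to require the most care is the last one: verifying that relative separatedness together with $\Theta \in W(L^1)(\RR^n)$ really controls the $p'$-th power envelope sum uniformly in $x$. Once that lemma (essentially a standard Wiener-amalgam bookkeeping argument) is in hand, the rest is a direct assembly of Hölder and the frame bound \eqref{eqn:frameinq}.
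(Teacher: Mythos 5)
Your proposal is correct and follows essentially the same route as the paper: H\"older's inequality, the frame bound \eqref{eqn:frameinq} to replace $\|c\|_{\ell^p}$ by $B^{1/p}\|f\|_{L^p(\RR^n)}$, and Wiener-amalgam bookkeeping over unit cubes using the relative separation of $\Gamma$ to bound the envelope sum uniformly in $x$. If anything, your reduction $\Theta^{p'}\le\|\Theta\|_\infty^{p'-1}\Theta$ handles the $p'$-th power of the envelope more carefully than the paper, which passes from $\bigl(\sum_\gamma|\Theta(x-\gamma)|^{p'}\bigr)^{1/p'}$ to $\bigl(\sum_\gamma|\Theta(x-\gamma)|\bigr)^{1/p'}$ without comment (a step that implicitly needs $\Theta\le 1$).
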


	\begin{proof}
		Let $x\in \RR^n$ be fixed and $f\in V$ be arbitrary.
		\begin{align*}
		f(x)&=\suml_{\gamma\in \Gamma} c_{\gamma}F_{\gamma}(x),\\
		|f(x)|&\leq \Big( \sum_{\gamma\in \Gamma} |c_{\gamma}|^p \Big)^{\frac{1}{p}}\Big( \sum_{\gamma\in \Gamma} |F_{\gamma}(x)|^{p'} \Big)^{\frac{1}{p'}}\\
		&\leq B^{\frac{1}{p}}\|f\|_{L^p(\RR^n)}\Big( \sum_{\gamma\in \Gamma} |\Theta(x-\gamma)|^{p'} \Big)^{\frac{1}{p'}}\\
		&\leq B^{\frac{1}{p}}\|f\|_{L^p(\RR^n)}\Big( \sum_{\gamma\in \Gamma} |\Theta(x-\gamma)| \Big)^{\frac{1}{p'}}\\
		&=B^{\frac{1}{p}}\|f\|_{L^p(\RR^n)}\Big( \sum_{k\in \ZZ^n} \sum_{\gamma\in \Gamma\cap [k,k+1]^n} |\Theta(x-\gamma)| \Big)^{\frac{1}{p'}}\\
		&\leq B^{\frac{1}{p}}\|f\|_{L^p(\RR^n)}\Big( \sum_{k\in \ZZ^n} N(\Gamma) \sup_{x\in [0,1]^n}|\Theta(x-k)| \Big)^{\frac{1}{p'}}\\
		|f(x)|&\leq B^{\frac{1}{p}}N(\Gamma)^{\frac{1}{p'}}\|\Theta\|_{W(L^1)(\RR^n)}^{\frac{1}{p'}}\|f\|_{L^p(\RR^n)}.
		\end{align*}
		Therefore, the point evaluation functional is continuous for each $x\in \RR^n$.
	\end{proof}

	From the proof of the above lemma, we choose a constant $D>1$ such that
	\begin{equation}
	\label{eqn:reproducinginq}
	\|f\|_{L^{\infty}(\RR^n)}\leq D\|f\|_{L^p(\RR^n)}, ~~~\forall f\in V.
	\end{equation}
	
	\begin{exam}
		In the following, we give some well known examples of localizable reproducing kernel spaces.
		\begin{enumerate}
			\item Let $\phi\in W(L^1)(\RR^n)$ be such that for some positive constants $A_1$ and $B_1$, $$A_1\leq \sum_{k\in \ZZ^n} |\hat{\phi}(\xi+k)|^2\leq B_1.$$ Then $\{ \phi(\cdot-k): k\in \ZZ^n \}$ is a frame for the shift-invariant space $V(\phi)=\Big\{ \sum\limits_{k\in \ZZ^n} c_k\phi(\cdot-k): (c_k)\in \ell^2(\ZZ^n) \Big\}$, see \cite[Theorem 9.2.5]{christensen2003introduction}. In this case, we have $F_{\gamma}=\phi(\cdot-\gamma),\, \gamma\in\Gamma=\ZZ^n$.
			
			\item Let $T$ be an idempotent integral operator ($T^2=T$) on $L^p(\RR^n)$ with the integral kernel $K$ satisfies the off-diagonal decay condition $$\|K\|_{S}:=\max\Big( \sup_{x\in \RR^n}\|K(x,\cdot)\|_{L^1(\RR^n)},\sup_{y\in \RR^n}\|K(\cdot,y)\|_{L^1(\RR^n)} \Big)<\infty,$$  and the regularity condition
			\begin{equation}
			\label{eqn:regularity}
			\lim_{\epsilon\rightarrow 0} \|w_{\epsilon}(K)\|_{S}=0,
			\end{equation}
			where $w_{\epsilon}(K)(x,y)=\sup\limits_{x',y'\in [-\epsilon,\epsilon]^n} |K(x+x',y+y')-K(x,y)|.$ Then the image space $V$ of $T$ is a reproducing kernel subspace of $L^p(\RR^n),$ and there exist relatively separated set $\Gamma\subseteq \RR^n$ and the collection $\{ \phi_{\gamma}: \gamma\in \Gamma \}$ forms $p$-frame for $V$. Apart from that there exists $h\in W(L^1)(\RR^n)$ such that $|\phi_{\gamma}(x)|\leq h(x-\gamma),$ see \cite{nashed2010sampling}.
		\end{enumerate}
	\end{exam}
	
	So far, the sampling problem had been studied for the image of an idempotent integral operator in \cite{nashed2010sampling,li2020random,patel2020random} and the regularity condition \eqref{eqn:regularity} was the key assumption. However, in practice the condition is not feasible. In the following, we give examples of localizable reproducing kernel space, which is the image of an idempotent integral operator but the regularity condition is either hard to verify or does not satisfy.
	
	\begin{exam}
		\label{exam:quasi-shift}
		Let $X=\{ x_k:k\in \ZZ \}$ be a relatively separated set of strictly increasing sequence in $\RR$ and $\phi \in W(L^1)(\RR)$. A quasi-shift invariant space is defined by $$V_X(\phi)=\Big\{ f=\sum_{k\in \ZZ} c_k\phi(\cdot-x_k) : c=(c_k)\in \ell^2(\ZZ) \Big\}.$$ For more literature on quasi-shift invariant space, we refer the interested reader to \cite{feichtinger2008perturbation,kumar2020sampling}. If the collection $\{ \phi(\cdot-x_k): k\in \ZZ \}$ is a frame for $V_X(\phi),$ then the space $V_X(\phi)$ is a localizable reproducing kernel Hilbert space. Furthermore, $V_X(\phi)$ can be written as the image of an idempotent integral operator with the integral kernel $$K(x,y)=\sum_{k\in \ZZ} \overline{\phi(x-x_k)}\tilde{\phi_k}(y),$$ where $\{ \tilde{\phi_k}:k\in \ZZ \}$ is the canonical dual frame of $\{ \phi(\cdot-x_k): k\in \ZZ \}$. In case of irregular sample set $X$, there is no constructive method to compute $\tilde{\phi_k}.$ As a consequence, the condition on $K$ is not easily verifiable. Even for the simplest case, the kernel assumption \eqref{eqn:regularity} was not satisfied. For example, let $\phi:\RR\rightarrow \RR$ be defined by
		\begin{align*}
		\phi(x)=\begin{cases}
		1,  & \text{if } 0\leq x\leq \frac{1}{2}, \\
		0, & \text{otherwise}.
		\end{cases}
		\end{align*}
		Then $\phi\in W(L^1)(\RR)$ and $\|\phi\|_{W(L^1)(\RR)}=1.$ The collection $\{ \sqrt{2}\phi(\cdot-k):k\in \ZZ \}$ is an orthonormal basis of the shift-invariant space $V(\phi)\subset L^2(\RR)$ and the reproducing kernel of $V(\phi)$ is defined by $K(x,y)=\sum\limits_{k\in \ZZ} 2\phi(x-k)\phi(y-k).$\par 
		For $\epsilon<\frac{1}{2}$,
		\begin{align*}
		\sup_{x'\in [-\epsilon,\epsilon]} |K(x+x',y)-K(x,y)|=\begin{cases}
		2\phi(y-k), & \text{if } x\in (k-\epsilon,k+\epsilon)\cup(k+\frac{1}{2}-\epsilon,k+\frac{1}{2}+\epsilon),\\
		0, & \text{otherwise}.
		\end{cases}
		\end{align*}
		Hence,
		\begin{align*}
		\Big\|\sup_{x'\in [-\epsilon,\epsilon]} |K(x+x',\cdot)-K(x,\cdot)|\Big\|_{L^1(\RR)}=\begin{cases}
		1, &\text{if } x\in (k-\epsilon,k+\epsilon)\cup(k+\frac{1}{2}-\epsilon,k+\frac{1}{2}+\epsilon),\\
		0, &\text{otherwise}.
		\end{cases}
		\end{align*}
		Therefore,
		\begin{align*}
		1&=\sup_{x\in \RR} \Big\|\sup_{x'\in [-\epsilon,\epsilon]} |K(x+x',\cdot)-K(x,\cdot)|\Big\|_{L^1(\RR)}\\
		&\leq \max\left( \sup_{x\in \RR}\|w_{\epsilon}(K)(x,\cdot)\|_{L^1(\RR)},\sup_{y\in \RR}\|w_{\epsilon}(K)(\cdot,y)\|_{L^1(\RR)} \right)=\|w_{\epsilon}(K)\|_{S}.
		\end{align*}
		
	\end{exam}
	
	We collect our assumption on compact domain $\Omega$ and the function $\Theta$ in the following list:
	
	\begin{assmp}
		Without loss of generality assume that $\mu(\Omega)\geq 1,$ and denote $d$ as the number of unit cube of the form $m+[0,1]^n$ covers the boundary of $\Omega$, where $m\in \ZZ^n$. Let the number of $\Gamma$ in $\Omega$ is bounded by $C(\Gamma)\mu(\Omega),$ where $C(\Gamma)$ is some positive constant.
	\end{assmp}
	
	\begin{assmp}
		\label{assmp:Theta}
		For every $C_N=[-N/2,N/2]^n$,
		\begin{equation}
		\label{eqn:thetaconver}
		\sum_{k\in \ZZ^n\setminus C_N} \sup_{x\in [0,1]^n} |\Theta(x-k)|< \frac{C}{N^{n\alpha}},
		\end{equation}
		where $C$ are positive constant, and $\alpha\geq\frac{p}{p-1}$ for $1<p<\infty$ and $\alpha\geq 1$ for $p=1$.
	\end{assmp}

	In the following, we provide an example of localizable reproducing kernel space satisfying Assumption \ref{assmp:Theta}.
	\begin{exam}
		Let $\phi:\CC^n\rightarrow \RR$ be a plurisubharmonic function and assume that there exist $m,M>0$ such that
		\begin{equation}
		\label{eqn:plurisub}
		im\partial\bar{\partial}|z|^2\leq i\partial\bar{\partial}\phi\leq iM\partial\bar{\partial}|z|^2.
		\end{equation}
		Let $A_{\phi}^2$ be the space of entire function on $\CC^n$ equipped with the norm $$\|f\|_{\phi,2}^2:=\int_{\CC^n} |f(z)|^2e^{-2\phi(z)}\,dz.$$ Then $A_{\phi}^2$ is a reproducing kernel Hilbert space and the reproducing kernel is denoted by $K_{\phi}(z,w).$ Moreover, the kernel $K_{\phi}$ satisfy the following off-diagonal decay estimate \cite{delin1998pointwise} $$|K_{\phi}(z,w)|e^{-\phi(z)-\phi(w)}\leq Ce^{-c|z-w|} ~~\forall z,w\in \CC^n.$$ We define the weighted Fock space $A_{\phi}^2$ as $$V_{\phi}^2=\{ f=ge^{-\phi} : g\in A_{\phi}^2 \}.$$ The space $V_{\phi}^2$ is a localizable reproducing kernel Hilbert space in $L^2(\RR^{2n})$, see \cite{grochenig2019strict}. If $\beta\in (0, \sqrt{2/n})$, then $\Gamma=\beta\ZZ^{2n}$ is a relatively separated set and there exists $\{ F_{\gamma}: \gamma\in \Gamma \}$ frame for $V_{\phi}^2$ with frame bounds $A=\frac{1}{4\beta^{2n}}$ and $B=\frac{3}{2}.$ In addition, the frame $\{ F_{\gamma}: \gamma\in \Gamma \}$ satisfy the localized estimate \eqref{eqn:framebound}, where $\Theta(z)=C_{\beta}e^{-c|z|}$. It is easy to verify that $\Theta\in W(L^1)(\RR^{2n})$ and satisfy Assumption \ref{assmp:Theta}.
	\end{exam}
	
	In the following lemma, we show that for any $f$ in $V$ there exists a function in finite-dimensional subspace of $V$ which is close to $f$. In order to proceed the lemma, let $M$ be a compact set and we define finite-dimensional subspace $V_M$ of $V$ as $$V_M=\Big\{ \suml_{\gamma\in \Gamma\cap M} c_{\gamma}F_{\gamma} : c_{\gamma}\in \RR \Big\}.$$
	
	\begin{lemma}
		\label{lemma:p-approx}
		For a given $\epsilon>0$ and $f\in V$, there exist compact set $M$ (depending on $\epsilon$ and $\Omega$) and $\tilde{f}\in V_M$ such that
		\begin{equation*}
		\|f-\tilde{f}\|_{L^p(\Omega)}<\epsilon\|f\|_{L^p(\RR^n)}.
		\end{equation*}
	\end{lemma}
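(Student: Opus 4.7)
The natural plan is to truncate the frame expansion of $f$ outside a large enlargement of $\Omega$. Writing $f=\sum_{\gamma\in\Gamma}c_\gamma F_\gamma$ with $\big(\sum_\gamma|c_\gamma|^p\big)^{1/p}\le B^{1/p}\|f\|_{L^p(\RR^n)}$ (from \eqref{eqn:frameinq}), fix a parameter $N$ to be chosen later, set $M=\Omega+C_N$ (a compact set depending only on $\Omega$ and $N$), and define
\[
\tilde f=\sum_{\gamma\in\Gamma\cap M}c_\gamma F_\gamma\in V_M.
\]
Then $|(f-\tilde f)(x)|\le\sum_{\gamma\in\Gamma\setminus M}|c_\gamma|\,\Theta(x-\gamma)$ by the localization estimate \eqref{eqn:framebound}. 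The key geometric observation is that for $x\in\Omega$ and $\gamma\notin\Omega+C_N$, one has $x-\gamma\notin C_N$; using that $\Gamma$ is relatively separated and contains at most $N(\Gamma)$ points per unit cube, one can compare the tail to a sum over $\ZZ^n\setminus C_{N-2}$ and invoke Assumption \ref{assmp:Theta}:
\[
\sup_{x\in\Omega}\sum_{\gamma\in\Gamma\setminus M}\Theta(x-\gamma)\le N(\Gamma)\sum_{k\in\ZZ^n\setminus C_{N-2}}\sup_{y\in[0,1]^n}|\Theta(y-k)|\le \frac{C\,N(\Gamma)}{(N-2)^{n\alpha}}.
\]

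For $1<p<\infty$ I would apply Hölder's inequality pointwise by splitting $|c_\gamma|\Theta(x-\gamma)=(|c_\gamma|^p\Theta(x-\gamma))^{1/p}\,\Theta(x-\gamma)^{1/p'}$, yielding
\[
|(f-\tilde f)(x)|^p\le\Big(\sum_{\gamma\notin M}|c_\gamma|^p\Theta(x-\gamma)\Big)\Big(\sum_{\gamma\notin M}\Theta(x-\gamma)\Big)^{p-1}.
\]
Integrating over $\Omega$, using $\int_\Omega\Theta(x-\gamma)\,dx\le\|\Theta\|_{W(L^1)(\RR^n)}$ on the first factor and the uniform tail estimate above on the second, I obtain
\[
\|f-\tilde f\|_{L^p(\Omega)}^p\le\|\Theta\|_{W(L^1)(\RR^n)}\cdot B\|f\|_{L^p(\RR^n)}^p\cdot\Big(\frac{C\,N(\Gamma)}{(N-2)^{n\alpha}}\Big)^{p-1}.
\]
Since $\alpha\ge p/(p-1)$ forces $\alpha(p-1)\ge p$, the last factor is $O(N^{-np})$ and choosing $N$ large makes it $<\epsilon^p$. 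For $p=1$ I bypass Hölder and directly estimate
\[
\|f-\tilde f\|_{L^1(\Omega)}\le\sum_{\gamma\notin M}|c_\gamma|\int_\Omega\Theta(x-\gamma)\,dx;
\]
since $\gamma\notin\Omega+C_N$ implies $\Omega-\gamma\subset\RR^n\setminus C_N$, the inner integral is bounded by a Wiener tail, which by Assumption \ref{assmp:Theta} with $\alpha\ge1$ is at most $C/(N-1)^n$, and then $\sum|c_\gamma|\le B\|f\|_{L^1(\RR^n)}$ closes the estimate.

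The routine parts are the Hölder split and the integral of $\Theta$; the main bookkeeping obstacle is the conversion between sums indexed by $\gamma\in\Gamma$ and sums over unit cubes of $\ZZ^n$, and the shift from $C_N$ to $C_{N-2}$ (or $C_{N-1}$) so that the strong-decay Assumption \ref{assmp:Theta} can be applied in the correct form $\sup_{y\in[0,1]^n}|\Theta(y-k)|$. Verifying that the threshold $\alpha\ge p/(p-1)$ actually suffices to absorb the power $p-1$ in Hölder, uniformly over $x\in\Omega$, is the delicate quantitative point on which the whole argument rests.
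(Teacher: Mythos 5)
Your argument is correct, and it is the same basic strategy as the paper's: truncate the frame expansion of $f$ outside an $N$-enlargement of $\Omega$, and control the tail through the localization estimate \eqref{eqn:framebound} together with the Wiener-amalgam decay of Assumption \ref{assmp:Theta}. The one substantive difference is where H\"older is applied. The paper splits $|c_\gamma|\,|F_\gamma(x)|$ with exponents $(p,p')$ directly, passes from $\bigl(\sum\Theta^{p'}\bigr)^{1/p'}$ to $\bigl(\sum\Theta\bigr)^{1/p'}$, and chooses $N\sim\mu(\Omega)^{1/n}$ so as to obtain the \emph{uniform} pointwise bound $|f(x)-\tilde f(x)|<\epsilon\,\mu(\Omega)^{-1}\|f\|_{L^p(\RR^n)}$ on $\Omega$ (equation \eqref{eqn:infty-approx}), from which the $L^p(\Omega)$ estimate follows since $\mu(\Omega)\ge 1$. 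You instead weight $\Theta$ into both H\"older factors and integrate, landing on the $L^p(\Omega)$ bound directly. Your route is marginally cleaner in two respects: it does not use the step $\Theta^{p'}\le\Theta$ (which tacitly requires $\Theta\le 1$ on the tail), and your $N$ can be taken independent of $\mu(\Omega)$. What it does \emph{not} deliver is the sup-norm estimate \eqref{eqn:infty-approx}, which the paper reuses in the proof of the main theorem (where both $\|f-\tilde f\|_{L^\infty(\Omega)}<\tau/\mu(\Omega)$ and $\|f-\tilde f\|_{L^p(\Omega)}<\tau$ are invoked); so as a drop-in replacement your proof would have to be supplemented by the paper's pointwise computation. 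Finally, your choice $M=\Omega+C_N$ is contained in the paper's $M=\Omega\cup\bigcup_{i=1}^{d}(m_i+C_N)$ up to replacing $N$ by $N+2$, so the dimension count $d_M\le C(\epsilon)\mu(\Omega)$ in the subsequent Remark is unaffected.
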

	
	\begin{proof}
		Let $f\in V$. Then there exists $(c_{\gamma})\in \ell^p(\Gamma)$ such that $f=\suml_{\gamma\in \Gamma} c_{\gamma}F_{\gamma}$. We consider $\tilde{f}=\suml_{\gamma\in \Gamma\cap M} c_{\gamma}F_{\gamma}$, where $M$ is a compact subset of $\RR^n$ (chosen later).
		
		Now, for $x\in \Omega$,
		\begin{align*}
		|f(x)-\tilde{f}(x)|&\leq \suml_{\gamma\in \Gamma\setminus M} |c_{\gamma}||F_{\gamma}(x)|\\
		&\leq \Big(\sum_{\gamma\in \Gamma\setminus M} |c_{\gamma}|^p \Big)^{\frac{1}{p}}\Big( \sum_{\gamma\in \Gamma\setminus C_N} |F_{\gamma}(x)|^{p'} \Big)^{\frac{1}{p'}}\\
		&\leq B^{\frac{1}{p}}\|f\|_{L^p(\RR^n)}\Big( \sum_{\gamma\in \Gamma\setminus M} |\Theta(x-\gamma)|^{p'} \Big)^{\frac{1}{p'}}\\
		&\leq B^{\frac{1}{p}}\Big( \sum_{\gamma\in \Gamma\setminus M} |\Theta(x-\gamma)| \Big)^{\frac{1}{p'}}\|f\|_{L^p(\RR^n)},
		\end{align*}
		Since $\Theta\in W(L^1)(\RR^n)$, for each $\epsilon>0$ there exists $N=\Big( \frac{CN(\Gamma)B^{\frac{p'}{p}}}{\epsilon^{p'}} \Big)^\frac{1}{n\alpha}\mu(\Omega)^{\frac{1}{n}}$ such that $$\sum_{k\in \ZZ^n\setminus C_{N}} \sup_{x\in [0,1]^n}|\Theta(x-k)|\leq \frac{C}{N^{n\alpha}}<\frac{\epsilon^{p'}}{N(\Gamma)\mu(\Omega)^{p'}B^{\frac{p'}{p}}}.$$ For $x\in \partial\Omega$, there exists $m\in \ZZ^n$ such that $x\in m+[0,1]^n$. Consider
		\begin{align*}
		\sum_{\gamma\in \Gamma\setminus \{ m+C_N \}} |\Theta(x-\gamma)|&\leq N(\Gamma)\sum_{k\in \ZZ^n\setminus C_N} \sup_{y\in [0,1]^n} |\Theta(y-k)|\\
		&<\frac{\epsilon^{p'}}{\mu(\Omega)^{p'}B^{\frac{p'}{p}}}.
		\end{align*}
		The same is true for every $x\in \partial\Omega\cap \{ m_i+[0,1]^n \},$ where $m_i\in \ZZ^n$ and the collection $\{ m_i+[0,1]^n:i=1,\dots,d\}$ cover $\partial \Omega.$ Choose $M= \Big( \cup_{i=1}^d \{m_i+C_N\} \Big)\cup \Omega.$ Then for each $x\in \Omega$, $$\sum_{\gamma\in \Gamma\setminus M} |\Theta(x-\gamma)|<\frac{\epsilon^{p'}}{\mu(\Omega)^{p'}B^{\frac{p'}{p}}},$$ and hence
		\begin{equation}
		\label{eqn:infty-approx}
		|f(x)-\tilde{f}(x)|<\frac{\epsilon}{\mu(\Omega)}\|f\|_{L^p(\RR^n)}, ~~~\forall x\in \Omega.
		\end{equation}
		This completes the proof.		
	\end{proof}
	
	\begin{rem}
		The space $V_M$ is generated by $\{ F_{\gamma}:\gamma\in \Gamma\cap M \}$ and the dimension $d_M$ of $V_M$ is at most the number of $\gamma$ in $M$. Therefore,
		\begin{align*}
		d_M\leq | \Gamma\cap M |&\leq \Big( C(\Gamma)\mu(\Omega)+dN(\Gamma)N^n \Big)\\
		&\leq \mu(\Omega)\Bigg( dN(\Gamma)\Big( \frac{CN(\Gamma)B^{\frac{p'}{p}}}{\epsilon^{p'}} \Big)^\frac{1}{\alpha}+C(\Gamma) \Bigg)\\
		&=C(\epsilon)\mu(\Omega),
		\end{align*}
		where $C(\epsilon)=\Bigg( dN(\Gamma)\Big( \frac{CN(\Gamma)B^{\frac{p'}{p}}}{\epsilon^{p'}} \Big)^\frac{1}{\alpha}+C(\Gamma) \Bigg)$ and $N(\Gamma)=\supl_{k\in \ZZ^n} |\Gamma\cap(k+[0,1]^n)|.$ 
	\end{rem}
	
	\begin{rem}
		
In the proof of the above Lemma, if we do not choose $M$ carefully, the bound of $d_M$ may not be effective estimation. For example, if $d$ is the number of unit cubes that cover $\Omega$, then $d>\mu(\Omega)$. On the other hand, if we consider a cube $Q$ in $\RR^n$ which contains $\Omega$, then $\mu(Q)\geq \mu(\Omega).$ In both cases, we get a larger bound for $d_M$ as $C\mu(\Omega)^2$ for some $C>0$.

	\end{rem}

	\section{Discretization of functions}
	\label{sec:coveringnumber}
	
	Let $(X,\|\cdot\|)$ be a Banach space and $B(f,r)$ denotes a ball of radius $r$ center at $f$. For a compact set $A$ and a positive number $\epsilon$, we define the covering number $N_{\epsilon}(A)$ as follows $$N_{\epsilon}(A):=N_{\epsilon}(A,X):=\min\Big\{ k: \exists f_1,f_2,\dots,f_k\in A,\, A\subseteq \bigcup\limits_{j=1}^k B(f_j,\epsilon) \Big\}.$$ The corresponding minimal $\epsilon$-net is denoted by $\nn_{\epsilon}(A,X)$, and $N_{\epsilon}(A,X)=|\nn_{\epsilon}(A,X)|$. The following lemma is a well-known estimation of covering number of a closed ball in finite-dimensional space. 
	
	\begin{lemma}[\cite{cucker2007learning}]
		\label{lemma:finitedimcoveringno}
		Let $X$ be a Banach space of dimension $s$. Then the number of open balls of radius $\omega$ to cover $\overline{B(0;r)}$ is bounded by $\left( \frac{2r}{\omega}+1 \right)^s$.
	\end{lemma}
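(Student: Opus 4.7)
The plan is to use the classical volumetric packing-and-covering argument, which works in any finite-dimensional normed space because after fixing a linear isomorphism with $\RR^s$ we obtain a translation-invariant Lebesgue measure for which dilations by a factor $t$ scale volumes by $t^s$.

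First, I would construct a maximal $\omega$-separated subset $\{f_1,\dots,f_k\}\subseteq \overline{B(0;r)}$, that is, a set satisfying $\|f_i-f_j\|\geq \omega$ for $i\neq j$ and to which no further point of $\overline{B(0;r)}$ can be added without breaking this separation. Such a set exists because $\overline{B(0;r)}$ is compact (finite dimension) and a greedy/Zorn's-lemma selection terminates. By maximality of the packing, for every $f\in\overline{B(0;r)}$ some $f_j$ lies within distance $\omega$ of $f$, so the open balls $B(f_j,\omega)$ cover $\overline{B(0;r)}$. Therefore $N_{\omega}(\overline{B(0;r)})\leq k$, and it suffices to bound $k$.

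Next I would use the standard ``disjoint-halves'' trick: the balls $B(f_j,\omega/2)$ are pairwise disjoint (triangle inequality together with $\|f_i-f_j\|\geq \omega$), and each is contained in $B(0,r+\omega/2)$ because $\|f_j\|\leq r$. Choosing any linear isomorphism $X\cong \RR^s$ and transporting Lebesgue measure $m$ to $X$, the scaling property of a norm yields $m(B(0,t))=t^s\,m(B(0,1))$. Comparing total measures,
\[
k\cdot (\omega/2)^s\, m(B(0,1))\;\leq\; m\bigl(B(0,r+\omega/2)\bigr)\;=\;(r+\omega/2)^s\, m(B(0,1)),
\]
which rearranges to $k\leq \bigl((r+\omega/2)/(\omega/2)\bigr)^s=\bigl(2r/\omega+1\bigr)^s$, giving the claimed bound.

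There is no real obstacle here; this is a textbook argument. The only point requiring a line of justification is the homogeneity $m(B(0,t))=t^s m(B(0,1))$, which follows from the fact that multiplication by $t$ on $\RR^s$ has Jacobian $t^s$ while $B(0,t)=tB(0,1)$ by the positive homogeneity of the norm. Everything else is a direct application of maximality and additivity of measure on disjoint sets.
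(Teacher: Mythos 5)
Your argument is correct and is the standard volumetric packing proof: a maximal $\omega$-separated set yields an open cover by maximality, and the disjointness of the half-radius balls inside $B(0,r+\omega/2)$ gives the bound $k\leq\left(\frac{2r}{\omega}+1\right)^s$ via the homogeneity $m(B(0,t))=t^s m(B(0,1))$ (using that $0<m(B(0,1))<\infty$ in finite dimensions). The paper gives no proof of its own but cites \cite{cucker2007learning}, where essentially this same volume-comparison argument appears, so your proposal matches the intended proof.
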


	Let $M$ be a compact subset of $\RR^n$. We consider the set $$V_{M,\Omega}=\Big\{ f\in V_M: \|f\|_{L^p(\RR^n)}^p=\mu(\Omega) \Big\}.$$ From \eqref{eqn:reproducinginq}, we see that $V_{M,\Omega}$ is a compact subset of $L^\infty(\RR^n)$ and bounded by $D\mu(\Omega)^{\frac{1}{p}}.$ Lemma \ref{lemma:finitedimcoveringno} gives a bound of covering number of $V_{M,\Omega}\subseteq L^\infty(\RR^n)$, i.e.
	\begin{equation}
	\label{eqn:coveringbound}
	N_{\epsilon}(V_{M,\Omega},L^\infty(\RR^n))\leq \Big( 1+\frac{2D\mu(\Omega)^{\frac{1}{p}}}{\epsilon} \Big)^{d_M}\leq \Big( \frac{4D\mu(\Omega)^{\frac{1}{p}}}{\epsilon} \Big)^{d_M}.
	\end{equation}
	
	In the following, we now discuss on discretization of functions in a compact set $V_{M,\Omega}$. Let $a\in (0,\frac{1}{2}]$ be a fixed small number (chosen later) and denote the set $\aa_j=\nn_{a(1+a)^j}(V_{M,\Omega},L^\infty(\RR^n))$ for $j\in \ZZ$. Let $j_0\in \ZZ$ be a fixed integer (specified later), and for each $j(\geq j_0)\in \ZZ,$ consider the map $A_j:V_{M,\Omega}\rightarrow \aa_j$ such that $A_j(f)$ is a function in $\aa_j$ closest to $f$ with respect to $\|\cdot\|_{L^\infty(\RR^n)}.$ Then
	\begin{equation}
	\label{eqn:Aj-approx}
	\|f-A_j(f)\|_{L^\infty(\RR^n)}\leq a(1+a)^j.
	\end{equation}
	If $f\in V_{M,\Omega}$ and $j\in \ZZ\cap (j_0,\infty)$, we construct a set of collection of points as follow.
	\begin{gather*}
	U_j(f):=\Big\{ x\in \RR^n:|A_j(f)(x)|\geq (1+a)^{j} \Big\},\\
	D_j(f):=U_j(f)\setminus \cup_{k>j} U_k(f), ~~~~~ D_{j_0}(f):=\RR^n\setminus \cup_{k>j_0} U_k(f).
	\end{gather*}
	For each $f\in V_{M,\Omega}$, we define piecewise constant function $h(f)$ by $$h(f)=\sum_{j>j_0} (1+a)^j\chi_{D_j(f)},$$ where $\chi_E$ is the characteristic function on $E$.\\
	
	The following lemma gives some properties of the above $h$-mapping. We show that the absolute value of the function $f$ in $V_{M,\Omega}$ is bounded above and below by constants times of $h(f)$. 
	\begin{lemma}
		For each $f\in V_{M,\Omega}$, 
		\begin{equation}
		\label{eqn:fdiscreteinq}
		C_1(a)h(f)(x)\leq |f(x)|\leq C_2(a)h(f)(x) ~~\forall x\in \RR^n\setminus D_{j_0}(f),
		\end{equation}
		and
		\begin{equation}
		\label{eqn:fboundinDj0}
		|f(x)|\leq C_2(a)(1+a)^{j_0} ~~\forall x\in D_{j_0}(f),
		\end{equation}
		where $C_1(a)=(1-a)$ and $C_2(a)=(1+a)^2$.
	\end{lemma}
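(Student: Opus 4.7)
The plan is to exploit the fact that the collection $\{D_j(f)\}_{j\geq j_0}$ partitions $\RR^n$, so every $x\in\RR^n\setminus D_{j_0}(f)$ belongs to a unique $D_j(f)$ with $j>j_0$, on which $h(f)(x)=(1+a)^j$. The two-sided estimate for $|f(x)|$ will then follow by comparing $f$ with the two nearest approximants $A_j(f)$ and $A_{j+1}(f)$: the definition of $D_j(f)=U_j(f)\setminus\bigcup_{k>j}U_k(f)$ supplies a lower bound $|A_j(f)(x)|\geq(1+a)^j$ and an upper bound $|A_{j+1}(f)(x)|<(1+a)^{j+1}$, while \eqref{eqn:Aj-approx} controls the discrepancy between $f$ and each $A_k(f)$ by $a(1+a)^k$.

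First I would fix $x\in D_j(f)$ with $j>j_0$. For the lower bound, the reverse triangle inequality combined with $x\in U_j(f)$ and \eqref{eqn:Aj-approx} gives
\begin{equation*}
|f(x)|\geq |A_j(f)(x)|-\|f-A_j(f)\|_{L^\infty(\RR^n)}\geq (1+a)^j-a(1+a)^j=(1-a)(1+a)^j=C_1(a)h(f)(x).
\end{equation*}
For the upper bound, since $j+1>j$ we have $x\notin U_{j+1}(f)$, whence $|A_{j+1}(f)(x)|<(1+a)^{j+1}$, and therefore
\begin{equation*}
|f(x)|\leq |A_{j+1}(f)(x)|+\|f-A_{j+1}(f)\|_{L^\infty(\RR^n)}<(1+a)^{j+1}+a(1+a)^{j+1}=(1+a)^{j+2}=C_2(a)h(f)(x).
\end{equation*}

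For $x\in D_{j_0}(f)$, the same upper-bound argument applies verbatim with $j$ replaced by $j_0$: since $x\notin U_{j_0+1}(f)$, one has $|A_{j_0+1}(f)(x)|<(1+a)^{j_0+1}$, and the triangle inequality then yields $|f(x)|<(1+a)^{j_0+2}=C_2(a)(1+a)^{j_0}$, which is \eqref{eqn:fboundinDj0}.

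There is no substantive obstacle here; the only thing to get right is the index bookkeeping, specifically the fact that the lower bound must use the approximant $A_j(f)$ at the same level as the defining set $U_j(f)$, whereas the upper bound must use the next approximant $A_{j+1}(f)$ together with the exclusion from $U_{j+1}(f)$ implicit in the definition of $D_j(f)$. This is what forces the asymmetric constants $C_1(a)=1-a$ and $C_2(a)=(1+a)^2$.
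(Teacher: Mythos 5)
Your proof is correct and follows essentially the same route as the paper's: fix $x$ in a unique $D_j(f)$, use $x\in U_j(f)$ with \eqref{eqn:Aj-approx} for the lower bound, and $x\notin U_{j+1}(f)$ with \eqref{eqn:Aj-approx} at level $j+1$ for the upper bound, the $D_{j_0}(f)$ case being the same upper-bound argument at $j=j_0$. The index bookkeeping and the resulting constants $C_1(a)=1-a$, $C_2(a)=(1+a)^2$ match the paper exactly.
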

	\begin{proof}
		Let $x\in D_j(f)$ with $j>j_0$. Then $x\in U_j(f)$ and $x\notin U_k(f)$ for $k>j$. Using \eqref{eqn:Aj-approx} and from the definition of $U_j(f)$, we get
		\begin{equation*}
		|f(x)|\geq |A_j(f)(x)|-a(1+a)^j\geq (1+a)^{j}-a(1+a)^j=C_1(a)(1+a)^j
		\end{equation*}
		and 
		\begin{equation*}
		|f(x)|\leq |A_{j+1}(f)(x)|+a(1+a)^{j+1}\leq (1+a)^{j+1}+a(1+a)^{j+1}=C_2(a)(1+a)^j,
		\end{equation*}
		where $C_1(a)=(1-a)$ and $C_2(a)=(1+a)^2.$ Therefore, for all $x\in \RR^n\setminus D_{j_0}(f)=\cup_{j>j_0} D_j(f)$ we have,
		\begin{equation*}
		C_1(a)h(f)(x)\leq |f(x)|\leq C_2(a)h(f)(x).
		\end{equation*}
		Similarly, the other inequality \eqref{eqn:fboundinDj0} can be derived.
	\end{proof}
	
	\begin{rem}
		For all $a\in (0,\frac{1}{2}],$ $C_1(a)\leq 1\leq C_2(a),$ and $$\lim_{a\rightarrow 0} C_1(a)=\lim_{a\rightarrow 0} C_2(a)=1.$$ For our results, we choose $a\in (0,\frac{1}{2}]$ such that $\Big( \frac{C_2(a)}{C_1(a)} \Big)^p\leq \frac{5}{4}.$
	\end{rem}

	We list some notations which will be used in the rest of the following sections.\\
	
	\begin{tabular}{ p{6cm} p{8cm} }
		\hline
		Symbol & Remark \\
		\hline
				$|X|=$ The number of element in $X$ & $X$ is a finite set.\\
		$N(\Gamma)=\supl_{k\in \ZZ^n} |\Gamma\cap (k+[0,1]^n)|$ & Intuitively, the maximum number of elements of $\Gamma$ in a unit cube of $\RR^n$. \\
		$p'=\frac{p}{p-1}$ & $p'=\infty$ for $p=1$.\\
		$C_1(a)=(1-a)$ & \multirow{2}{8cm}{with $a$ satisfying $\Big( \frac{C_2(a)}{C_1(a)} \Big)^p\leq \frac{5}{4}$.}\\
		$C_2(a)=(1+a)^2$ & \\
		\hline
	\end{tabular}
	\vspace{0.5cm}
	
	In the following lemma, we give a condition on $h$-mapping such that for functions in $V_{M,\Omega}$ the sample set discretize the integral norm on $\Omega$. For a sample set $\xi=\{ \xi_\nu \}_{\nu=1}^r$ and a function $f\in V$, denote $$S(f,\xi):=(f(\xi_1),\dots,f(\xi_r))\in \RR^r, ~~~ \|S(f,\xi)\|_p^p:=\frac{1}{r}\sum_{\nu=1}^r |f(\xi_\nu)|^p.$$
	
	\begin{lemma}
		\label{lemma:saminqcon}
		Let  $\xi$ be a sample set in $\Omega$ and $f\in V_{M,\Omega}$. Assume that the function $h(f)$ satisfy the following inequality 
\begin{equation}\label{eqn:assumeh}
\frac{1}{\mu(\Omega)}\|h(f)\|_{L^p(\Omega)}^p-\sigma\leq \|S(h(f),\xi)\|_p^p\leq \frac{1}{\mu(\Omega)}\|h(f)\|_{L^p(\Omega)}^p+\sigma,
\end{equation}		
		for some $\sigma>0$. Then
		\begin{multline}
		\label{eqn:saminqVNp}
		C_1(a)^p\Big(\frac{C_2(a)^{-p}}{\mu(\Omega)}\|f\|_{L^p(\Omega)}^p-(1+a)^{pj_0}-\sigma\Big)\\
		\leq \|S(f,\xi)\|_p^p\leq C_2(a)^p\Big(\frac{C_1(a)^{-p}}{\mu(\Omega)}\|f\|_{L^p(\Omega)}^p+(1+a)^{pj_0}+\sigma\Big).
		\end{multline}
	\end{lemma}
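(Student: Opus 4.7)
The plan is to chain together three ingredients: (i) the pointwise sandwich \eqref{eqn:fdiscreteinq}–\eqref{eqn:fboundinDj0} relating $|f|$ and $h(f)$, applied at the sample points $\xi_\nu$; (ii) the hypothesis \eqref{eqn:assumeh}, which lets us replace $\|S(h(f),\xi)\|_p^p$ by $\frac{1}{\mu(\Omega)}\|h(f)\|_{L^p(\Omega)}^p$ up to an additive error $\sigma$; and (iii) the \emph{same} pointwise sandwich integrated over $\Omega$, which converts $\|h(f)\|_{L^p(\Omega)}$ into $\|f\|_{L^p(\Omega)}$. All the constants $C_1(a)^{\pm p}$ and $C_2(a)^{\pm p}$ appearing in the conclusion are already determined by the pointwise estimates.

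\textbf{Upper bound.} First I would observe that \eqref{eqn:fdiscreteinq}–\eqref{eqn:fboundinDj0} together yield the global pointwise bound
\[
|f(x)|^p \le C_2(a)^p h(f)(x)^p + C_2(a)^p (1+a)^{pj_0}\chi_{D_{j_0}(f)}(x),\qquad x\in\RR^n,
\]
since $h(f)\equiv 0$ on $D_{j_0}(f)$. Averaging over $\xi_1,\dots,\xi_r$ and bounding $\chi_{D_{j_0}(f)}\le 1$ gives
\[
\|S(f,\xi)\|_p^p \le C_2(a)^p\,\|S(h(f),\xi)\|_p^p + C_2(a)^p(1+a)^{pj_0}.
\]
Apply the right half of \eqref{eqn:assumeh} to the first term, then insert the integrated pointwise bound $\|h(f)\|_{L^p(\Omega)}^p \le C_1(a)^{-p}\|f\|_{L^p(\Omega)}^p$ (which follows from $C_1(a)h(f)\le|f|$ on $\Omega\setminus D_{j_0}(f)$ and $h(f)=0$ on $D_{j_0}(f)$). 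Collecting terms produces the claimed upper estimate.

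\textbf{Lower bound.} Here the pointwise inequality $|f(x)|^p \ge C_1(a)^p h(f)(x)^p$ holds everywhere (trivially on $D_{j_0}(f)$, where $h(f)=0$), so averaging gives $\|S(f,\xi)\|_p^p \ge C_1(a)^p\|S(h(f),\xi)\|_p^p$. Apply the left half of \eqref{eqn:assumeh}. To finish, I need a lower bound on $\|h(f)\|_{L^p(\Omega)}^p$ in terms of $\|f\|_{L^p(\Omega)}^p$: integrating $|f|^p \le C_2(a)^p h(f)^p$ over $\Omega\setminus D_{j_0}(f)$ and $|f|^p \le C_2(a)^p(1+a)^{pj_0}$ over $\Omega\cap D_{j_0}(f)$ (using $\mu(\Omega\cap D_{j_0}(f))\le\mu(\Omega)$) yields
\[
\frac{1}{\mu(\Omega)}\|h(f)\|_{L^p(\Omega)}^p \ge \frac{C_2(a)^{-p}}{\mu(\Omega)}\|f\|_{L^p(\Omega)}^p - (1+a)^{pj_0}.
\]
Substituting gives the lower inequality in \eqref{eqn:saminqVNp}.

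The only subtlety — really just bookkeeping — is making sure the $(1+a)^{pj_0}$ exceptional-set term is accounted for \emph{both} when passing from $|f|^p$ to $h(f)^p$ at the sample points and again when integrating over $\Omega$; in each case $h(f)$ vanishes on $D_{j_0}(f)$, so the extra term enters additively (not multiplicatively), which is exactly the form stated in \eqref{eqn:saminqVNp}. No further ingredient beyond the two preceding lemmas is needed.
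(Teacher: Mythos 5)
Your proposal is correct and follows essentially the same route as the paper: split off the exceptional set $D_{j_0}(f)$ using \eqref{eqn:fboundinDj0}, sandwich $|f|^p$ by $C_1(a)^p h(f)^p$ and $C_2(a)^p h(f)^p$ elsewhere via \eqref{eqn:fdiscreteinq}, invoke \eqref{eqn:assumeh}, and then convert $\|h(f)\|_{L^p(\Omega)}^p$ back to $\|f\|_{L^p(\Omega)}^p$ by integrating the same pointwise bounds. The bookkeeping of the additive $(1+a)^{pj_0}$ terms matches the paper's computation exactly.
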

	
	\begin{proof}
		For points on the set $D_{j_0}(f)$, the inequality \eqref{eqn:fboundinDj0} implies
		$$\int_{D_{j_0}(f)\cap \Omega} |f(x)|^p dx\leq C_2(a)^p(1+a)^{pj_0}\mu(\Omega)$$
		and $$\frac{1}{r}\sum_{\nu:\xi_\nu\in D_{j_0}(f)} |f(\xi_\nu)|^p\leq C_2(a)^p(1+a)^{pj_0}.$$
		By \eqref{eqn:fdiscreteinq} we have 
		\begin{align*}
		\|S(f,\xi)\|_p^p&\leq C_2(a)^p(1+a)^{pj_0}+C_2(a)^p\|S(h(f),\xi)\|_p^p\\
		&\leq C_2(a)^p(1+a)^{pj_0}+\frac{C_2(a)^p}{\mu(\Omega)}\|h(f)\|_{L^p(\Omega)}^p+C_2(a)^p\sigma\\
		&\leq C_2(a)^p\Big(\frac{C_1(a)^{-p}}{\mu(\Omega)}\|f\|_{L^p(\Omega)}^p+(1+a)^{pj_0}+\sigma\Big).
		\end{align*}
		On the other hand, we have
		\begin{align*}
		\|S(f,\xi)\|_p^p&\geq C_1(a)^p\|S(h(f),\xi)\|_p^p\\
		&\geq C_1(a)^p\Big( \frac{1}{\mu(\Omega)}\|h(f)\|_{L^p(\Omega)}^p-\sigma \Big)\\
		&\geq C_1(a)^p\Big( \frac{C_2(a)^{-p}}{\mu(\Omega)}\int_{\Omega\setminus D_{j_0}(f)} |f(x)|^pdx -\sigma \Big)\\
		&\geq C_1(a)^p\Big( \frac{C_2(a)^{-p}}{\mu(\Omega)}\|f\|_{L^p(\Omega)}^p-\frac{C_2(a)^{-p}}{\mu(\Omega)}\int_{D_{j_0}(f)\cap \Omega} |f(x)|^pdx -\sigma \Big)\\
		&\geq C_1(a)^p\Big( \frac{C_2(a)^{-p}}{\mu(\Omega)}\|f\|_{L^p(\Omega)}^p-(1+a)^{pj_0}-\sigma \Big).
		\end{align*}
	\end{proof}

	The lemma implies that the discretization of the integral norm of $f\in V_M$ and corresponding simple function $h(f)$ are related. We aim to find the probability for which the random sample set $\xi$ satisfy the condition (\ref{eqn:assumeh}) on $h(f)$ for all $f\in V_{M,\Omega}.$  
	
	\section{Random Sampling}
	\label{sec:randomsampling}
	
	In this section we discuss the main result of this paper. In order to derive the probabilistic estimates, we make use of the following lemma \cite[Lemma 2.1]{bourgain1989approximation}.
	\begin{lemma}
		\label{lemma:probabilitybound}
		 Let $\{ g_\nu \}_{\nu=1}^r$ be independent random variables with zero mean on probability space $(X,\rho)$ such that $$\|g_\nu\|_{L^1(X,\rho)}\leq 2, ~~ \|g_\nu\|_{L^{\infty}(X,\rho)}\leq L, ~~~1\leq \nu\leq r.$$ Then for any $\eta\in (0,1)$ we have the following probability bound $$\PP\left\{ \Big|\sum_{\nu=1}^r g_\nu\Big|\geq r\eta \right\}<2\exp\Big( -\frac{r\eta^2}{8L} \Big).$$
	\end{lemma}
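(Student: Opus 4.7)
The plan is to establish the one-sided tail bound $\PP\{\sum_{\nu=1}^r g_\nu \geq r\eta\} \leq \exp(-r\eta^2/(8L))$ by a Chernoff-style exponential-moment argument, and then obtain the two-sided bound in the statement by applying the one-sided result to the family $\{-g_\nu\}_{\nu=1}^r$, which satisfies exactly the same hypotheses, and taking a union bound with an extra factor of $2$.

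First I would invoke Markov's inequality on the random variable $\exp(t\sum_\nu g_\nu)$ for a parameter $t>0$ to be optimized later, and use independence to factorize:
$$\PP\Bigl\{\sum_{\nu=1}^r g_\nu \geq r\eta\Bigr\} \leq e^{-tr\eta}\,\EE\Bigl[\exp\Bigl(t\sum_{\nu=1}^r g_\nu\Bigr)\Bigr] = e^{-tr\eta}\prod_{\nu=1}^r \EE[\exp(tg_\nu)].$$
The core analytic step is then to control each single-variable moment generating function $\EE[\exp(tg_\nu)]$ uniformly in $\nu$.

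To bound $\EE[\exp(tg_\nu)]$, I would Taylor-expand the exponential, take expectations termwise, kill the linear term using $\EE[g_\nu]=0$, and control the remaining moments by combining the two given norm hypotheses: $\EE[|g_\nu|^k] \leq \|g_\nu\|_\infty^{k-1}\,\EE[|g_\nu|] \leq 2L^{k-1}$ for every $k\geq 2$. Summing the resulting series yields $\EE[\exp(tg_\nu)] \leq 1 + \tfrac{2}{L}(e^{tL}-1-tL)$, and applying the elementary inequality $e^x - 1 - x \leq x^2$ (valid on $|x|\leq 1$) gives $\EE[\exp(tg_\nu)] \leq 1 + 2t^2 L \leq \exp(2t^2 L)$, provided the range restriction $tL \leq 1$ is respected.

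Plugging this uniform bound back into the Markov estimate produces $\PP\{\sum_\nu g_\nu \geq r\eta\} \leq \exp(-tr\eta + 2rt^2 L)$, and the exponent is minimized by the choice $t = \eta/(4L)$, which yields exactly the claimed rate $-r\eta^2/(8L)$. Since $\eta<1$ this choice satisfies $tL = \eta/4 < 1$, so the preceding moment-generating-function bound applies. The main delicacy of the argument is not conceptual but arithmetic: one has to keep careful track of the constants through the moment bound, the series summation, and the optimization so that the sharp factor $8$ appears in the denominator of the exponent; verifying the elementary scalar inequality $e^x-1-x\leq x^2$ on the correct interval is the only non-Markov-type ingredient.
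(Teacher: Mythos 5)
Your argument is correct. The paper itself offers no proof of this lemma --- it is quoted verbatim as Lemma~2.1 of the cited Bourgain--Lindenstrauss--Milman reference --- so there is nothing internal to compare against; your Chernoff/Bernstein derivation is the standard route to exactly this statement and all the steps check out: the moment bound $\EE[|g_\nu|^k]\le \|g_\nu\|_{\infty}^{k-1}\EE[|g_\nu|]\le 2L^{k-1}$ is where the hypothesis $\|g_\nu\|_{L^1}\le 2$ enters, the inequality $e^x-1-x\le x^2$ on $|x|\le 1$ is valid (for $x\ge 0$ one has $e^x-1-x=x^2\sum_{k\ge 2}x^{k-2}/k!\le (e-2)x^2$, and for $x\le 0$ Taylor's remainder gives $\le x^2/2$), the optimizing choice $t=\eta/(4L)$ satisfies $tL=\eta/4<1$ precisely because $\eta\in(0,1)$, and the exponent evaluates to $-r\eta^2/(8L)$ as claimed. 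The only cosmetic discrepancy is that your union bound yields ``$\le$'' where the statement asserts a strict ``$<$''; this is immaterial for the way the lemma is used in Corollary~4.2 and Theorem~4.3.
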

 	
 	It is easy to verify that the above lemma implies the following result.
	\begin{cor}\cite[Corollary 2.2]{bourgain1989approximation}
		\label{cor:samprob}
		Let $\xi=\{ \xi_\nu \}_{\nu=1}^r$ be a random points drawn from probability space $(X,\rho)$ and $\{ \ff_j \}_{j\in G}$ be a finite collection of finite set of functions in $L^1(X,\rho)$. Assume that for each $j\in G$ and $f\in \ff_j$, we have $$\|f\|_{L^1(X,\rho)}\leq 1, ~~ \|f\|_{L^{\infty}(X,\rho)}\leq L_j.$$ Then for each $j\in G$, for any $\eta_j \in (0,1)$ and for all $f\in \ff_j,$ we have $$\left| \|f\|_{L^1(X,\rho)}-\frac{1}{r}\sum_{\nu=1}^r |f(\xi_\nu)| \right|\leq \eta_j,$$ with probability at least $1-2\suml_{j\in G} |\ff_j|\exp\Big( -\frac{r\eta_j^2}{8L_j} \Big).$
	\end{cor}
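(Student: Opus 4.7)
The plan is to apply Lemma \ref{lemma:probabilitybound} separately to each $f$ in the (finite) collection $\bigcup_{j\in G}\ff_j$ and then close with a union bound. First, I would fix $j\in G$ and $f\in\ff_j$ and introduce the centered i.i.d.\ random variables
$$g_\nu(\xi_\nu):=|f(\xi_\nu)|-\|f\|_{L^1(X,\rho)},\qquad 1\leq \nu\leq r,$$
which are defined on the same probability space $(X,\rho)^r$ as $\xi$. By construction $\EE g_\nu=0$, and the triangle inequality gives $\|g_\nu\|_{L^1(X,\rho)}\leq 2\|f\|_{L^1(X,\rho)}\leq 2$, so the first hypothesis of Lemma \ref{lemma:probabilitybound} is immediate.

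Second, I would verify the uniform $L^\infty$ bound. Since $\|f\|_{L^\infty(X,\rho)}\leq L_j$ and $\|f\|_{L^1(X,\rho)}\leq 1$, we get $\|g_\nu\|_{L^\infty(X,\rho)}\leq L_j+1$, which for the nontrivial range $L_j\geq 1$ can be absorbed into the single constant $L_j$ appearing in the final probability bound (otherwise the conclusion is vacuous for the parameters of interest). Applying Lemma \ref{lemma:probabilitybound} with $L=L_j$ and $\eta=\eta_j\in(0,1)$ then yields, after dividing through by $r$,
$$\PP\left\{\left|\,\|f\|_{L^1(X,\rho)}-\frac{1}{r}\sum_{\nu=1}^{r}|f(\xi_\nu)|\,\right|>\eta_j\right\}<2\exp\!\Big(-\frac{r\eta_j^2}{8L_j}\Big),$$
which is the desired pointwise-in-$f$ deviation estimate.

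Third, I would finish with a two-level union bound: for each $j\in G$ the family $\ff_j$ is finite, so the probability that the deviation exceeds $\eta_j$ for \emph{some} $f\in\ff_j$ is at most $2|\ff_j|\exp(-r\eta_j^2/(8L_j))$; summing over the finite index set $G$ and passing to the complement gives the stated success probability $1-2\sum_{j\in G}|\ff_j|\exp(-r\eta_j^2/(8L_j))$. There is no serious obstacle here—the argument is essentially mechanical once the correct centered variables are chosen—so the only subtlety is the bookkeeping to check that the $L^\infty$ bound $L_j+1$ can be replaced by $L_j$ in the exponent. If one wanted to be strictly tight, one would instead invoke Lemma \ref{lemma:probabilitybound} with $L=L_j+1$ (or $L=2L_j$) at a harmless cost of a constant in the exponent, which does not affect the qualitative conclusions drawn from the corollary later in the paper.
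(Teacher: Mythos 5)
Your argument is correct and is exactly what the paper intends: the paper offers no proof of this corollary, simply remarking that it follows from Lemma \ref{lemma:probabilitybound}, and your centering-plus-union-bound is the standard route. The one subtlety you flag resolves cleanly without any loss of constant: since $\rho$ is a probability measure, $\|f\|_{L^1(X,\rho)}\leq \|f\|_{L^{\infty}(X,\rho)}\leq L_j$, so $g_\nu=|f(\xi_\nu)|-\|f\|_{L^1(X,\rho)}$ is a difference of two numbers lying in $[0,L_j]$, hence $\|g_\nu\|_{L^{\infty}(X,\rho)}\leq L_j$ exactly and Lemma \ref{lemma:probabilitybound} applies with $L=L_j$, giving the stated exponent $-\frac{r\eta_j^2}{8L_j}$ directly.
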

	
	Let $p\in [1,\infty)$. For $j>j_0$, define  $$\ff_j:=\Big\{ \frac{4}{5}(1+a)^{pj}\chi_{D_j(f)}: f\in V_{M,\Omega} \Big\}.$$ By definition of $D_j(f)$, we consider those $j>j_0$ such that $C_1(a)(1+a)^{j}\leq D\mu(\Omega)^{\frac{1}{p}}$. Otherwise $D_j(f)$ is empty. Choose $J\in \ZZ$ such that $	C_1(a)(1+a)^{J}\leq D\mu(\Omega)^{\frac{1}{p}},$ i.e., $ \displaystyle 	
		J\leq \frac{\log (D\mu(\Omega)^{\frac{1}{p}}/C_1(a))}{\log(1+a)}.$

	Now, consider the index set $G=[j_0,J]\cap \ZZ.$ Then, we apply Corollary \ref{cor:samprob} for the collection of sets $\{\ff_j\}_{j\in G}$ and prove the following theorem.
	\begin{thm}
		\label{thm:saminqVN}
		Let $\xi=\{ \xi_\nu \}_{\nu=1}^r$ be a sequence of i.i.d. random points that are uniformly distributed over the compact set $\Omega.$ If the sample size satisfy $$r\geq \frac{10}{\sigma^2}d_M|G|^2,$$ then for every $f\in V_M$ the following inequality
		\begin{multline*}
		\frac{C_1(a)^p}{\mu(\Omega)}\Big(C_2(a)^{-p}\|f\|_{L^p(\Omega)}^p-(1+a)^{pj_0}\|f\|_{L^p(\RR^n)}^p-\sigma\|f\|_{L^p(\RR^n)}^p\Big)\leq \|S(f,\xi)\|_p^p\\
		\leq \frac{C_2(a)^p}{\mu(\Omega)}\Big(C_1(a)^{-p}\|f\|_{L^p(\Omega)}^p+(1+a)^{pj_0}\|f\|_{L^p(\RR^n)}^p+\sigma\|f\|_{L^p(\RR^n)}^p\Big)
		\end{multline*}
		holds with probability at least $1-2A_1|G|\exp\Big( -(\frac{r\sigma^2}{|G|^2}-d_M)(1+a)^{-pj_0} \Big)$, where $A_1=\oo(\exp(d_M\log\mu(\Omega)))$.
	\end{thm}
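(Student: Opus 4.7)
Since the target inequality is homogeneous in $f$, I first reduce to the sphere $V_{M,\Omega}=\{g\in V_M: \|g\|_{L^p(\RR^n)}^p=\mu(\Omega)\}$ by the scaling $f\mapsto(\mu(\Omega)/\|f\|_{L^p(\RR^n)}^p)^{1/p}f$; the $\|f\|^p_{L^p(\RR^n)}$-factors on the right of the stated inequality are exactly what this rescaling produces. For $f\in V_{M,\Omega}$, Lemma~\ref{lemma:saminqcon} tells me it suffices to control $\bigl|\|S(h(f),\xi)\|_p^p-\|h(f)\|_{L^p(\Omega)}^p/\mu(\Omega)\bigr|$ by $\sigma$ uniformly in $f$. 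Expanding $h(f)^p=\sum_{j>j_0}(1+a)^{pj}\chi_{D_j(f)}$ with disjoint $D_j(f)$, this splits into a ``volume vs.\ empirical mass'' estimate on each family of sets $\{D_j(f):f\in V_{M,\Omega}\}$, which is exactly what Corollary~\ref{cor:samprob} is designed for.

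For each $j\in G$ I apply Corollary~\ref{cor:samprob} to the family $\ff_j=\{\tfrac{4}{5}(1+a)^{pj}\chi_{D_j(f)}:f\in V_{M,\Omega}\}$ on $(\Omega,dx/\mu(\Omega))$. The hypotheses are verified as follows: the $L^\infty$-bound is $L_j=\tfrac{4}{5}(1+a)^{pj}$; for the $L^1$-bound I use that $|f|\geq C_1(a)(1+a)^j$ on $D_j(f)$, so $\|f\|_{L^p(\RR^n)}^p=\mu(\Omega)$ forces $\mu(D_j(f))\leq \mu(\Omega)C_1(a)^{-p}(1+a)^{-pj}$, and the calibration $(C_2(a)/C_1(a))^p\leq 5/4$ (equivalently $C_1(a)^p\geq 4/5\cdot C_2(a)^{-p}$) makes the constant $4/5$ exactly right to force $\|g\|_{L^1(\Omega,\rho)}\leq 1$. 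Choosing a common deviation $\eta_j=4\sigma/(5|G|)$, the corollary yields $\bigl|\|g\|_{L^1(\Omega,\rho)}-\tfrac{1}{r}\sum_\nu |g(\xi_\nu)|\bigr|\leq \eta_j$ uniformly over $\ff_j$ with probability $\geq 1-2|\ff_j|\exp(-r\eta_j^2/(8L_j))$. Dividing by $4/5$, weighting by $(1+a)^{pj}$, and summing over $j\in G$ produces the required $\sigma$-discretization of $\|h(f)\|_{L^p(\Omega)}^p/\mu(\Omega)$; Lemma~\ref{lemma:saminqcon} then gives the two-sided inequality of the theorem.

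For the combinatorial estimate of $|\ff_j|$, I use that $D_j(f)$ is determined by the values of $A_k(f)$ for $j\leq k\leq J$, giving $|\ff_j|\leq \prod_{k=j}^J|\aa_k|$; each factor is bounded via \eqref{eqn:coveringbound} by $(4D\mu(\Omega)^{1/p}/(a(1+a)^k))^{d_M}$. Telescoping, $\log|\ff_j|=O\bigl(d_M(J-j+1)\log\mu(\Omega)\bigr)$, which is absorbed into the factor $A_1=\oo(\exp(d_M\log\mu(\Omega)))$ (one can in fact separate the $j$-independent bulk of the product from a residual $d_M(1+a)^{-pj}$-sized contribution). Substituting the chosen $\eta_j$ and $L_j$, the single-$j$ failure probability becomes $2|\ff_j|\exp(-r\sigma^2/(10|G|^2(1+a)^{pj}))$, and a union bound over the $|G|$ values of $j$, together with the assumed $r\geq (10/\sigma^2)d_M|G|^2$ used to cancel the $d_M(1+a)^{-pj_0}$ coming from $\log|\ff_{j_0}|$, yields the stated bound $2A_1|G|\exp(-(r\sigma^2/|G|^2-d_M)(1+a)^{-pj_0})$.

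The main obstacle is the last bookkeeping step. The exponent $(1+a)^{-pj_0}$ in the final probability arises because $L_j=\tfrac{4}{5}(1+a)^{pj}$ is largest at $j=j_0$ (weakest concentration) while $|\ff_j|$ is also largest there, so both ``bad'' contributions peak at the same index; one has to synchronise the constants $4/5$, $(C_2/C_1)^p\leq 5/4$, the choice $\eta_j=4\sigma/(5|G|)$, and the lower bound on $r$ so that exactly the difference $r\sigma^2/|G|^2-d_M$ survives after absorbing $\log|\ff_{j_0}|$, and so that the $(J-j)$ factors accumulated in the telescoping covering-number product do not inflate $A_1$ beyond $\exp(d_M\log\mu(\Omega))$. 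Everything else (the scaling back from $V_{M,\Omega}$ to $V_M$, and the application of Lemma~\ref{lemma:saminqcon}) is routine once this balance is in place.
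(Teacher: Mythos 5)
Your overall architecture is exactly the paper's: normalize to $V_{M,\Omega}$ by homogeneity, reduce via Lemma~\ref{lemma:saminqcon} to a $\sigma$-discretization of $\tfrac{1}{\mu(\Omega)}\|h(f)\|_{L^p(\Omega)}^p$, apply Corollary~\ref{cor:samprob} to the families $\ff_j$ with $L_j=\tfrac45(1+a)^{pj}$ and $\eta_j=4\sigma/(5|G|)$ (your verification of $\|\cdot\|_{L^1(\Omega,\rho)}\le 1$ via $\mu(D_j(f))\le\mu(\Omega)C_1(a)^{-p}(1+a)^{-pj}$ and the calibration $(C_2(a)/C_1(a))^p\le 5/4$ is precisely the paper's computation), then sum over $j\in G$ using disjointness of the $D_j(f)$ and rescale. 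Up to that point the proposal is correct and identical in substance to the paper's proof.

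The gap is in the cardinality count, and you flagged it yourself without closing it. The paper simply asserts $|\ff_j|\le|\aa_j|$, which via \eqref{eqn:coveringbound} gives $\log|\ff_j|\le \tfrac{d_M}{p}\log(C_4\mu(\Omega))+d_M(1+a)^{-pj}$ and hence the stated $A_1=\oo(\exp(d_M\log\mu(\Omega)))$. You instead bound $|\ff_j|\le\prod_{k=j}^{J}|\aa_k|$, which is the honest bound (since $D_j(f)=U_j(f)\setminus\bigcup_{k>j}U_k(f)$ genuinely depends on all of $A_j(f),\dots,A_J(f)$, not on $A_j(f)$ alone), but then $\log|\ff_{j_0}|\le\sum_{k=j_0}^{J}\log|\aa_k|$ contains up to $|G|$ copies of the $j$-independent bulk term $\tfrac{d_M}{p}\log(C_4\mu(\Omega))$; only the $d_M(1+a)^{-pk}$ tails sum geometrically to $O(d_M(1+a)^{-pj})$. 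Your claim that this "is absorbed into $A_1=\oo(\exp(d_M\log\mu(\Omega)))$" is therefore false as written: your count inflates $A_1$ to $\exp(O(d_M|G|\log\mu(\Omega)))$, and no tuning of the constants $4/5$, $5/4$, or $\eta_j$ removes a factor of $|G|$ from an exponent. Concretely, with your bound the failure probability is only controlled when $r\gtrsim\sigma^{-2}d_M|G|^3$, which degrades the downstream sample complexity from $\oo(\mu(\Omega)(\log\mu(\Omega))^3)$ to $\oo(\mu(\Omega)(\log\mu(\Omega))^4)$; so you prove a genuinely weaker statement than the one claimed. To obtain the theorem as stated you must either prove the paper's (unargued, and not obvious) claim that the set $D_j(f)$ is determined by $A_j(f)$ alone, or reorganize the union bound so that the bulk term $\tfrac{d_M}{p}\log(C_4\mu(\Omega))$ is paid only once rather than once per level $k\ge j$.
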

	
	\begin{proof}
		Let $\xi=\{ \xi_\nu \}_{\nu=1}^r$ be the random points uniformly, identically and independently distributed over $\Omega$ with probability measure $d\rho=\frac{1}{\mu(\Omega)}dx.$ Hence for each $j\in G$ and $\phi_j=\frac{4}{5}(1+a)^{pj}\chi_{D_j(f)}$, we have
		\begin{align*}
		\|\phi_j\|_{L^{\infty}(\Omega,\rho)}\leq \frac{4}{5}(1+a)^{pj}:=L_j,
		\end{align*}
		and
		\begin{align*}
		\|\phi_j\|_{L^1(\Omega,\rho)}&=\frac{1}{\mu(\Omega)}\int_{\Omega}  \frac{4}{5}(1+a)^{pj}\chi_{D_j(f)}\,dx\\
		&=\frac{4}{5\mu(\Omega)}\|(1+a)^{pj}\chi_{D_j(f)}\|_{L^1(\Omega)}\\
		&=\frac{4C_1(a)^{-p}}{5\mu(\Omega)}\|f\|_{L^p(\Omega)}^p\\
		&\leq C_2(a)^{-p}\leq 1.
		\end{align*}		
		
		Denote $\eta_j=\frac{4\sigma}{5|G|},$ then from Corollary \ref{cor:samprob} we have
		\begin{gather}
		\left| \frac{1}{r}\sum_{\nu=1}^r |\phi_j(\xi_\nu)|-\frac{1}{\mu(\Omega)}\|\phi_j\|_{L^1(\Omega)} \right|\leq \eta_j \notag\\
		\label{eqn:saminqphi_j}
		\frac{1}{\mu(\Omega)}\|\phi_j\|_{L^1(\Omega)}-\eta_j\leq \frac{1}{r}\sum_{\nu=1}^r |\phi_j(\xi_\nu)|\leq \frac{1}{\mu(\Omega)}\|\phi_j\|_{L^1(\Omega)}+\eta_j,
		\end{gather}
		holds for each $j\in G$ and $\phi_j\in \ff_j$ with minimum probability of $1-2\suml_{j\in G} |\ff_j|\exp\Big( -\frac{r\eta_j^2}{8L_j} \Big).$\\
		
		Since  the sets $D_{j}(f)$ are pairwise disjoint, we get
		\begin{align*}
		\sum_{j\in G} \frac{1}{r}\sum_{\nu=1}^r |\phi_j(\xi_\nu)|&=\frac{4}{5}\frac{1}{r}\sum_{\nu=1}^r\sum_{j\in G} (1+a)^{pj}\chi_{D_j(f)}(\xi_\nu)\\
		&=\frac{4}{5}\|S(h(f),\xi)\|_p^p,
		\end{align*}
		and
		\begin{align*}
		\sum_{j\in G} \|\phi_j\|_{L^1(C_R)}&=\frac{4}{5}\int_{\Omega}\sum_{j\in G}(1+a)^{pj}\chi_{D_j(f)}(x)\, dx\\
		&=\frac{4}{5}\|h(f)\|_{L^p(\Omega)}^p.
		\end{align*}
		
		Hence, for all $f\in V_{M,\Omega}$, \eqref{eqn:saminqphi_j} implies
		\begin{gather*}
		\frac{1}{\mu(\Omega)}\sum_{j\in G} \|\phi_j\|_{L^1(\Omega)}-\sum_{j\in G} \eta_j\leq \sum_{j\in G} \frac{1}{r}\sum_{\nu=1}^r |\phi_j(\xi_\nu)|\leq \frac{1}{\mu(\Omega)}\sum_{j\in G} \|\phi_j\|_{L^1(\Omega)}+\sum_{j\in G} \eta_j\\
		\frac{4}{5\mu(\Omega)}\|h(f)\|_{L^p(\Omega)}^p-\frac{4\sigma}{5}\leq \frac{4}{5}\|S(h(f),\xi)\|_p^p\leq \frac{4}{5\mu(\Omega)}\|h(f)\|_{L^p(\Omega)}^p+\frac{4\sigma}{5}\\
		\frac{1}{\mu(\Omega)}\|h(f)\|_{L^p(\Omega)}^p-\sigma\leq \|S(h(f),\xi)\|_p^p\leq \frac{1}{\mu(\Omega)}\|h(f)\|_{L^p(\Omega)}^p+\sigma.
		\end{gather*}
		Therefore, Lemma \ref{lemma:saminqcon} implies that \eqref{eqn:saminqVNp} hold for every $f\in V_{M,\Omega}$ with probability at least $1-2\suml_{j\in G} |\ff_j|\exp\Big( -\frac{r\eta_j^2}{8L_j} \Big).$\\
		
		In order to calculate the bound of $\suml_{j\in G} |\ff_j|\exp\Big( -\frac{r\eta_j^2}{8L_j} \Big),$ we first evaluate the bound of $|\ff_j|.$ By the definition of $D_j(f)$ and construction of $\ff_j$ we conclude that $$|\ff_j|\leq |\aa_j|.$$
		Therefore, from \eqref{eqn:coveringbound} we have
		\begin{align*}
		\log|\ff_j|&\leq \log|\aa_j|\leq d_M\log\Big( \frac{4D\mu(\Omega)^{\frac{1}{p}}}{a(1+a)^j} \Big)\\
		&\leq d_M\log\Big( \frac{4D\mu(\Omega)^{\frac{1}{p}}}{a} \Big)+ d_M(1+a)^{-pj}\\
		&\leq \frac{d_M\log(C_4\mu(\Omega))}{p}+d_M(1+a)^{-pj},
		\end{align*}
		where $C_4=(\frac{4D}{a})^p,$ and $A_1=\exp(p^{-1}d_M\log(C_4\mu(\Omega))).$\\
		
		Now,
		\begin{align*}
		\suml_{j\in G} |\ff_j|\exp\Big( -\frac{r\eta_j^2}{8L_j} \Big)&\leq \suml_{j\in G} A_1\exp\Big( d_M(1+a)^{-pj}-\frac{r\eta_j^2}{8L_j} \Big)\\
		&=\suml_{j\in G} A_1\exp\Big( d_M(1+a)^{-pj}-\frac{r\sigma^2}{10|G|^2}(1+a)^{-pj} \Big)\\
		&=\suml_{j\in G} A_1\exp\Big( -R(1+a)^{-pj} \Big)\\
		&\leq A_1|G|\exp\Big( -R(1+a)^{-pj_0} \Big),
		\end{align*}
		where $R=\frac{r\sigma^2}{10|G|^2}-d_M,$ for sufficiently large sample size $r$, we can chose $R>0.$
		
		Let $f\in V_M\setminus \{0\}$   be arbitrary. Then $g=\frac{f\mu(\Omega)^{\frac{1}{p}}}{\|f\|_{L^p(\RR^n)}}\in V_{M,\Omega}$ and satisfy \eqref{eqn:saminqVNp}. Therefore,
		\begin{multline*}
		\frac{C_1(a)^p}{\mu(\Omega)}\Big(C_2(a)^{-p}\|f\|_{L^p(\Omega)}^p-(1+a)^{pj_0}\|f\|_{L^p(\RR^n)}^p-\sigma\|f\|_{L^p(\RR^n)}^p\Big)\leq \|S(f,\xi)\|_p^p\\
		\leq \frac{C_2(a)^p}{\mu(\Omega)}\Big(C_1(a)^{-p}\|f\|_{L^p(\Omega)}^p+(1+a)^{pj_0}\|f\|_{L^p(\RR^n)}^p+\sigma\|f\|_{L^p(\RR^n)}^p\Big),
		\end{multline*}
		holds with probability at least $1-2A_1|G|\exp\Big( -R(1+a)^{-pj_0} \Big)$ with an additional assumption $R>0.$	
	\end{proof}
	
	\begin{thm}
		Let $\xi=\{ \xi_\nu: \nu=1,2,\dots r \}$ be a sequence of independent random variable that are drawn uniformly from $\Omega$. Suppose that $0<\tau<1$ is small enough and if the number of sample size $r$ satisfies $$r\geq \frac{10}{\sigma^2}d_M|G|^2=\oo(\mu(\Omega)(\log \mu(\Omega))^2),$$ then for every $f\in V^*(\Omega,\delta)$ the sampling inequality
		\begin{equation}
		\label{eqn:mainsaminq}
		\frac{r}{\mu(\Omega)}\Big(\frac{1}{5}-\frac{p2^{p+1}\tau}{5}-pD^{p-1}\tau\Big)\|f\|_{L^p(\RR^n)}^p\leq \sum_{\nu=1}^r |f(\xi_\nu)|^p\leq \frac{r}{\mu(\Omega)}\Big( \frac{2B}{A}+pD^{p-1} \Big)\|f\|_{L^p(\RR^n)}^p,
		\end{equation}
		holds with probability at least $1-2A_1|G|\exp\Big( -R(1+a)^{-pj_0} \Big).$
	\end{thm}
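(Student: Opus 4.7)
The plan is to reduce the statement to Theorem \ref{thm:saminqVN}, which already establishes a sampling inequality on the finite-dimensional subspace $V_M$, by approximating each $f\in V^*(\Omega,\delta)$ by an element $\tilde f\in V_M$ via Lemma \ref{lemma:p-approx} and then absorbing the residual errors into the small parameter $\tau$ that appears in \eqref{eqn:mainsaminq}. First I would fix $\tau>0$ and apply Lemma \ref{lemma:p-approx} with accuracy proportional to $\tau$; this produces a compact set $M$ with $d_M=\mathcal O(\mu(\Omega))$ and an element $\tilde f\in V_M$ satisfying both the integrated bound $\|f-\tilde f\|_{L^p(\Omega)}\le\tau\|f\|_{L^p(\RR^n)}$ and the pointwise bound $|f(x)-\tilde f(x)|\le (\tau/\mu(\Omega))\|f\|_{L^p(\RR^n)}$ for $x\in\Omega$ (cf.\ \eqref{eqn:infty-approx}). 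Since the coefficient sequence of $\tilde f$ is a truncation of that of $f$, the frame inequality \eqref{eqn:frameinq} yields $\|\tilde f\|_{L^p(\RR^n)}^p\le (B/A)\|f\|_{L^p(\RR^n)}^p$, and Theorem \ref{thm:saminqVN} applied to $\tilde f$ gives, with the stated probability, two-sided control of $\|S(\tilde f,\xi)\|_p^p$ by $\|\tilde f\|_{L^p(\Omega)}^p$ and $\|\tilde f\|_{L^p(\RR^n)}^p$.

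Next I would transfer the inequality from $\tilde f$ back to $f$. For the sample sums I use the mean-value bound $\big||a|^p-|b|^p\big|\le p|a-b|\max(|a|,|b|)^{p-1}$ combined with the reproducing-kernel estimate \eqref{eqn:reproducinginq}, which (since $\mu(\Omega)\ge 1$) gives $\max\bigl(|f(\xi_\nu)|,|\tilde f(\xi_\nu)|\bigr)\le 2D\|f\|_{L^p(\RR^n)}$. Together with the pointwise bound on $f-\tilde f$ this produces
\[
\Bigl|\sum_{\nu=1}^{r}|f(\xi_\nu)|^{p}-\sum_{\nu=1}^{r}|\tilde f(\xi_\nu)|^{p}\Bigr|\;\le\; r\,pD^{p-1}\tau\,\|f\|_{L^p(\RR^n)}^{p},
\]
modulo constants absorbed into $\tau$; exactly the same inequality in integrated form, using $\|f-\tilde f\|_{L^p(\Omega)}\le\tau\|f\|_{L^p(\RR^n)}$ and Minkowski, produces the $\tfrac{p2^{p+1}\tau}{5}$-type comparison between $\|\tilde f\|_{L^p(\Omega)}^p$ and $\|f\|_{L^p(\Omega)}^p$ that appears in the lower bound of \eqref{eqn:mainsaminq}. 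Finally, the $\delta$-concentration hypothesis is used to replace $\|f\|_{L^p(\Omega)}^p$ by $\|f\|_{L^p(\RR^n)}^p$ on the lower side, after which the factor $C_1(a)^p C_2(a)^{-p}\ge 4/5$ (guaranteed by the choice $(C_2(a)/C_1(a))^p\le 5/4$) combines with the small $(1-\delta)$ loss to produce the clean $\tfrac{1}{5}$ coefficient.

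The parameters are then pinned down by requiring $(1+a)^{pj_0}\le\tau$, $\sigma$ of order $\tau$, and $\delta$ correspondingly small; the sample-size bound and the probability estimate carry over verbatim from Theorem \ref{thm:saminqVN}, with $|G|=\mathcal O(\log\mu(\Omega))$ because $J\le\log(D\mu(\Omega)^{1/p}/C_1(a))/\log(1+a)$. The main obstacle I expect is purely bookkeeping: the single parameter $\tau$ of the final statement has to control simultaneously the approximation error in Lemma \ref{lemma:p-approx} (which also fixes $d_M$ and hence the constant hidden in $\mathcal O(\mu(\Omega)(\log\mu(\Omega))^2)$), the pointwise and integrated comparisons between $f$ and $\tilde f$, and the residuals $(1+a)^{pj_0}$ and $\sigma$ from Theorem \ref{thm:saminqVN}, and one must verify that all of these residual contributions collapse into exactly the $\tfrac{p2^{p+1}\tau}{5}+pD^{p-1}\tau$ and $\tfrac{2B}{A}+pD^{p-1}$ corrections stated in \eqref{eqn:mainsaminq} without inflating the sample-size order.
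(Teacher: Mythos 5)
Your proposal follows essentially the same route as the paper's own proof: approximate $f$ by $\tilde f\in V_M$ via Lemma \ref{lemma:p-approx}, apply Theorem \ref{thm:saminqVN} to $\tilde f$, transfer back to $f$ using the pointwise and integrated comparisons $\bigl||a|^p-|b|^p\bigr|\le p|a-b|\max(|a|,|b|)^{p-1}$ together with $\|\tilde f\|_{L^p(\RR^n)}^p\le (B/A)\|f\|_{L^p(\RR^n)}^p$, and invoke $\delta$-concentration for the lower bound. The only (immaterial) difference is in the final bookkeeping: the paper calibrates $\sigma$ and $(1+a)^{pj_0}$ against $(1-\delta)$ rather than against $\tau$ to extract the clean $\tfrac15$ constant, but this does not change the argument.
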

	
	\begin{proof}
		Without loss of generality, let $f\in V^*(\Omega,\delta)$ with $\|f\|_{L^p(\RR^n)}=1.$
		
		From Lemma \ref{lemma:p-approx}, for $\tau>0$ there exist compact set $M$ and $\tilde{f}\in V_M$ such that
		\begin{gather*}
		\|f-\tilde{f}\|_{L^{\infty}(\Omega)}<\frac{\tau}{\mu(\Omega)},\\
		\|f-\tilde{f}\|_{L^{p}(\Omega)}<\tau,
		\end{gather*}
		and
		\begin{align*}
		\tilde{f}(x)&=\suml_{\gamma\in \Gamma\cap M} c_{\gamma}F_{\gamma}(x),\\
		|\tilde{f}(x)|&\leq \Big( \sum_{\gamma\in \Gamma\cap M} |c_{\gamma}|^p \Big)^{\frac{1}{p}}\Big( \sum_{\gamma\in \Gamma\cap M} |F_{\gamma}(x)|^{p'} \Big)^{\frac{1}{p'}}\\
		&\leq B^{\frac{1}{p}}\|f\|_{L^p(\RR^n)}\Big( \sum_{\gamma\in \Gamma} |\Theta(x-\gamma)|^{p'} \Big)^{\frac{1}{p'}}\\
		&\leq D\|f\|_{L^p(\RR^n)}.
		\end{align*}
		Next, we get
		\begin{align*}
		\Big| \|f\|_{L^{p}(\Omega)}^p-\|\tilde{f}\|_{L^{p}(\Omega)}^p \Big|&\leq p(1+\tau)^{p-1}\tau\\
		&\leq p2^{p-1}\tau 
		\end{align*}
		\begin{equation}
		\label{eqn:mainCRnorminq}
		\|f\|_{L^{p}(\Omega)}^p-p2^{p-1}\tau \leq \|\tilde{f}\|_{L^{p}(\Omega)}^p\leq \|f\|_{L^{p}(\Omega)}^p+p2^{p-1}\tau,
		\end{equation}
		and
		\begin{align*}
		\Big| |f(\xi_\nu)|^p-|\tilde{f}(\xi_\nu)|^p \Big|&\leq p\Big( \max\{ |f(\xi_\nu)|,|\tilde{f}(\xi_\nu)| \} \Big)^{p-1}|f(\xi_\nu)-\tilde{f}(\xi_\nu)|\\
		&\leq \frac{pD^{p-1}\tau}{\mu(\Omega)}.
		\end{align*}
		Therefore,
		\begin{align}
		\label{eqn:mainsuminq}
		\sum_{\nu=1}^r |\tilde{f}(\xi_\nu)|^p-\frac{r pD^{p-1}\tau}{\mu(\Omega)}\leq \sum_{\nu=1}^r |f(\xi_\nu)|^p\leq \sum_{\nu=1}^r |\tilde{f}(\xi_\nu)|^p+\frac{rpD^{p-1}\tau}{\mu(\Omega)}.
		\end{align}
		Since $V_M\subseteq V$ and using \eqref{eqn:frameinq}, we get $$\|\tilde{f}\|_{L^p(\RR^n)}^p\leq \frac{1}{A}\sum_{\gamma\in \Gamma\cap M} |c_{\gamma}|^p\leq \frac{1}{A}\sum_{\gamma\in \Gamma} |c_{\gamma}|^p\leq \frac{B}{A}\|f\|_{L^p(\RR^n)}^p.$$
		
		Hence, it follows from Theorem \ref{thm:saminqVN}, and equations \eqref{eqn:mainsuminq} and \eqref{eqn:mainCRnorminq} that
		\begin{align}
		\label{eqn:ransam1}
		\frac{r}{\mu(\Omega)}C_1(a)^p\Big(C_2(a)^{-p}\|&\tilde{f}\|_{L^p(\Omega)}^p-(1+a)^{pj_0}\|\tilde{f}\|_{L^p(\RR^n)}^p-\sigma\|\tilde{f}\|_{L^p(\RR^n)}^p\Big)\leq \sum_{\nu=1}^r |\tilde{f}(\xi_\nu)|^p \nonumber\\
		&\leq \frac{r}{\mu(\Omega)}C_2(a)^p\Big(C_1(a)^{-p}\|\tilde{f}\|_{L^p(\Omega)}^p+(1+a)^{pj_0}\|\tilde{f}\|_{L^p(\RR^n)}^p+\sigma\|\tilde{f}\|_{L^p(\RR^n)}^p\Big) \nonumber\\
		\frac{r}{\mu(\Omega)}C_1(a)^p\Big(C_2(a)^{-p}\|&f\|_{L^p(\Omega)}^p-C_2(a)^{-p}p2^{p-1}\tau-(1+a)^{pj_0}\frac{B}{A}-\sigma\frac{B}{A}\Big) \nonumber\\
		&\leq \sum_{\nu=1}^r |\tilde{f}(\xi_\nu)|^p\leq \frac{r}{\mu(\Omega)}C_2(a)^p\Big(C_1(a)^{-p}+(1+a)^{pj_0}+\sigma\Big)\frac{B}{A} \nonumber\\
		\frac{r}{\mu(\Omega)}\Big[C_1(a)^p\Big(C_2(a)^{-p}(1&-\delta)-(1+a)^{pj_0}\frac{B}{A}-\sigma\frac{B}{A}\Big)-\frac{4}{5}p2^{p-1}\tau-pD^{p-1}\tau\Big] \nonumber\\
		&\leq \sum_{\nu=1}^r |f(\xi_\nu)|^p \leq \frac{r}{\mu(\Omega)}\Big[ C_2(a)^p\Big(C_1(a)^{-p}+(1+a)^{pj_0}+\sigma\Big)\frac{B}{A}+pD^{p-1} \Big].
		\end{align}
		Now, we choose $j_0$ and $\sigma$ such that
		\begin{gather*}
		\frac{\sigma B}{A}=\frac{C_2(a)^{-p}(1-\delta)}{2},\\
		\frac{B(1+a)^{pj_0}}{A}=\frac{C_2(a)^{-p}(1-\delta)}{4}.
		\end{gather*}
		This implies, $$|j_0|=\log\Big( \frac{4BC_2(a)^p}{A(1-\delta)} \Big)/p\log(1+a).$$
		and
		\begin{align*}
		|G|\leq J+|j_0|&\leq \log (D^pC_1(a)^{-p}\mu(\Omega))/p\log(1+a)+\log\Big( \frac{4BC_2(a)^p}{A(1-\delta)} \Big)/p\log(1+a)\\
		&\leq \frac{\log (C_5\mu(\Omega))}{p\log(1+a)},
		\end{align*}
		where $C_5=\frac{5BD^p}{A(1-\delta)}.$
		Also, $\frac{C_2(a)^p}{C_1(a)^p}\leq \frac{5}{4}.$\\
		
		Hence, bounds of the sampling inequality \eqref{eqn:ransam1} can be revised. We get the lower estimate as
		\begin{align*}
		\frac{r}{\mu(\Omega)}\Big(\frac{1}{5}-\frac{4}{5}p2^{p-1}\tau-pD^{p-1}\tau\Big)\leq \frac{r}{\mu(\Omega)}\Big(\frac{4C_1(a)^p}{5C_2(a)^p}-\frac{4}{5}p2^{p-1}\tau-pD^{p-1}\tau\Big),
		\end{align*}
		and upper bound as
		\begin{align*}
		\frac{r}{\mu(\Omega)}\Big( \frac{5B}{4A}+\frac{1-\delta}{4}+\frac{1-\delta}{2}+pD^{p-1} \Big)\leq \frac{r}{\mu(\Omega)}\Big( \frac{2B}{A}+pD^{p-1} \Big).
		\end{align*}
		This implies for every $f\in V^*(\Omega,\delta)$
		\begin{equation*}
		\frac{r}{\mu(\Omega)}\Big(\frac{1}{5}-\frac{p2^{p+1}\tau}{5}-pD^{p-1}\tau\Big)\|f\|_{L^p(\RR^n)}^p\leq \sum_{\nu=1}^r |f(\xi_\nu)|^p\leq \frac{r}{\mu(\Omega)}\Big( \frac{2B}{A}+pD^{p-1} \Big)\|f\|_{L^p(\RR^n)}^p,
		\end{equation*}
		holds with probability at least $1-2A_1|G|\exp\Big( -R(1+a)^{-pj_0} \Big).$
	\end{proof}
	
	\begin{rem}
		Let $\{ \xi_\nu \}$ be random sample drawn uniformly from $\Omega$. The sampling inequality \eqref{eqn:mainsaminq} hold with probability at least $1-\epsilon$ if
		\begin{align*}
		2A_1|G|\exp\Big( -R(1+a&)^{-pj_0} \Big)\\
		&\leq 2|G|\exp\Big( d_Mp^{-1}\log(C_4\mu(\Omega))-(1+a)^{-pj_0}\Big( \frac{r\sigma2}{10|G|^2}-d_M \Big) \Big)\\
		&\leq 2\frac{\log(C_5\mu(\Omega))}{p\log(1+a)}\exp\Big( C(\omega)\mu(\Omega)p^{-1}\log(C_4\mu(\Omega))\\
		&\hspace{2cm}-(1+a)^{-pj_0}\Big( \frac{r\sigma^2p^2(\log(1+a))^2}{(\log(C_5\mu(\Omega)))^2}-C(\omega)\mu(\Omega) \Big) \Big)\\
		&<\epsilon.
		\end{align*}
		Hence, if the sample size $r\geq \oo\big( \mu(\Omega)(\log \mu(\Omega))^3 \big)$, then sampling inequality \eqref{eqn:mainsaminq} holds with probability at least $1-\epsilon$.
	\end{rem}

	\section*{Acknowledgements}
	The first author is thankful to Council of Scientic \& Industrial Research for financial support. The second author acknowledges Department of Science and Technology, Government of India for the financial support through project no. CRG/2019/002412.
	
	\bibliographystyle{plain}
	\bibliography{paper2}
\end{document}